\newcommand{\C}{\mathbb{C}}
\newcommand{\R}{\mathbb{R}}
\newcommand{\Z}{\mathbb{Z}}
\newcommand{\N}{\mathbb{N}}
\newtheorem{Theorem}{Theorem}[section]
\newtheorem{Corollary}[Theorem]{Corollary}
\newtheorem{Lemma}[Theorem]{Lemma}
\theoremstyle{remark}
\newtheorem{Remark}[Theorem]{Remark}
\theoremstyle{definition}
\def\H{{\rm \textbf{H}}}
\newcommand\Hy[1]{{\rm \textbf{H#1}}}
\renewcommand\Re{{\cal R}e}
\renewcommand\Im{{\cal I}m}
\title{Properties of periodic solutions near their oscillation threshold for a class of hyperbolic partial differential equations with localized nonlinearity}
\author{B. Ricaud}
\begin{document}

\maketitle
\begin{abstract}
The periodic solutions of a type of nonlinear hyperbolic partial differential equations with a localized nonlinearity are investigated. For instance, these equations are known to describe several acoustical systems with fluid-structure interaction. It also encompasses particular types of delay differential equations. These systems undergo a bifurcation with the appearance of a small amplitude periodic regime. Assuming a certain regularity of the oscillating solution, several of its properties around the bifurcation are given: bifurcation point, dependence of both the amplitude and period with respect to the bifurcation parameter, and law of decrease of the Fourier series components. All the properties of the standard Hopf bifurcation in the non-hyperbolic case are retrieved.
In addition, this study is based on a Fourier domain analysis and the harmonic balance method has been extended to the class of infinite dimensional problems hereby considered. Estimates on the errors made if the Fourier series is truncated are provided.
\end{abstract}

\section{Introduction}

\subsection{Physical motivations}

The starting point of the present work is due to physical motivations. The first objective is to justify a thirty-year-old conjecture assumed in order to obtain information on the bifurcations encountered in woodwind musical instruments. 
It is now established~\cite{FR,KC} that woodwind instruments are nonlinear systems having a transition between a steady state and an oscillating regime, for a certain value of the pressure in the mouth of the musician. These instruments possess a fluid-structure interaction which implies the presence of partial differential equation (PDE) along with a nonlinearity. Analytical calculations have been done for small oscillations around the bifurcation, giving valuable information on the oscillation with respect to the parameters of the instrument. But these calculations are possible only if a conjecture, first presented by Worman~\cite{W}, is assumed. Suppose the instrument emits a periodic sound of small amplitude, and the acoustic pressure $p_0$ inside the mouthpiece is oscillating with period $\tau$. Let $p_a$ be the atmospheric pressure (or any other suitable pressure scale) and define the dimensionless quantity $p=p_0/p_a$. Let $P(\omega_q)$ be the Fourier component of $p$ associated with the angular frequency $\omega_q=2q\pi/\tau$. Worman's conjecture states:
$$|P(\omega_q)|\le|P(\omega_1)|^{|q|}, \text{ for all } q\in\Z^*,
$$
where, since it is of small amplitude, $|P(\omega_1)|<1$. This key hypothesis is not obvious and arguments to justify it have been given in the theoretical work of Grand et al~\cite{GGL}. However, the Fourier series has been truncated in the calculations of this latter work and the control of the remaining tail was not investigated. The first aim of the present study is to show that the weaker assumption of a twice continuously differentiable solution for the model of woodwind instruments implies Worman's conjecture. This former regularity assumption, allowing to control the tail of the Fourier series, is quite natural from a physical point of view.

In addition to this first motivation, the hypotheses on the equations are weak enough to hold for a broader class of physical systems. 
The present approach is especially suitable for vibrating systems described by equations in the frequency domain with the formalism of transfer functions. This study proposes an extension of the harmonic balance method, widespread in engineering~\cite{MC,AL}, and of the underlying Galerkin procedure, to infinite dimensional systems governed by partial differential equations with a localized nonlinearity. This generalization holds for the case of small amplitude solutions which are twice continuously differentiable with respect to time.
The present study justifies in a mathematical way the frequency domain expression, the truncation of the Fourier series, and the analysis around the bifurcation threshold.

\subsection{Mathematical framework} 

Properties of the solutions of a nonlinear system around a bifurcation point can be obtained by general techniques and theorems such as the reduction to the center manifold and normal forms~\cite{HI},~\cite{DFKM} or the Hopf bifurcation theorem~\cite{CR}.
Nevertheless, in the case of PDEs involving the wave equation, which makes them hyperbolic, the hypotheses required for the use of these powerful theorems may not be satisfied. In such systems, the norm of the resolvent associated to the linearized operator in a neighbourhood of a bifurcation may not decrease when the spectral parameter goes to infinity. This is the case for woodwind musical instruments.
The present study shows that although the hypotheses of the Hopf theorem are not satisfied, several of the results given by this theorem can be retrieved, provided the periodic solution is of small amplitude and twice continuously differentiable. The existence of such a solution is an hypothesis of the present work. Notice however that several studies show that nonlinear systems with PDEs possess such regular solutions, for example the case of the wave equation with nonlinear terms and damping, see e.g.~\cite{GS,CLT,Co}. 

After having settled the hypotheses satisfied by the systems under study, a correspondence between expressions of the model in the time and the frequency domain is established in lemma~\ref{lemmetf}. Then theorems~\ref{Thbif} gives required conditions for an oscillation to appear and theorems~\ref{Thbif} and~\ref{ThP1}, with corollary~\ref{corolomega} give the dependence of both the Fourier harmonics and the frequency on the bifurcation parameter, near its birth point. As expected, it is similar to the finite dimensional case and what is given by the finite dimensional Hopf bifurcation theorem. In addition, this study gives also a valuable estimation of the law of decrease of the harmonics of the solution with respect to the multiple of the fundamental frequency in theorem~\ref{Th1}. This justifies the process of truncation used in the harmonic balance method and gives an error term on it.
The last section comes back to the physical motivations and is devoted to an application in the field of musical acoustics. It introduces the equations modeling a reed woodwind instrument and shows how the mathematical results can be applied to this example.

\section{The system}\label{system}

Let $t\in T\subset\R$ denote the time and $X\subset\R^n$ the space of configurations, for $n\in\N$. Let ${\cal X}=L^2(T\times X)$ be the separable Hilbert space of square integrable real-valued functions on the domain $T\times X$. Only time periodic solutions are investigated and $T\times X$ is supposed to be a compact domain of $\R^{n+1}$. The nonlinearity takes place at a particular location $x_0\in X$ in space and the infinite dimensional dynamical system under study is described formally by 
\begin{align}\label{eqdep}
Ap&=\delta(x-x_0)B{\cal G}(\gamma,p),
\end{align}
where $p\in{\cal X}$ is the state of the system, the real number $\gamma$ is to be called the bifurcation parameter, $A$ is a linear partial differential operator. The right hand side of~\eqref{eqdep} models the localized nonlinearity with ${\cal G}$ being a nonlinear function of $p$, $B$ a linear operator acting on ${\cal G}$ and the symbol $\delta$ being the Dirac distribution.
As a consequence,~\eqref{eqdep} must be understood in the distribution sense. In addition, since $T\times X$ is compact, some linear boundary conditions are associated to~\eqref{eqdep}. In a woodwind musical instrument, for instance a clarinet, the function $p$ represents the air pressure inside the bore (resonator) and $Ap$ is the wave equation (which may contain dissipative terms) in this domain of finite length. The position $x_0$ is the top of the instrument, where the tip of the reed is located. This flexible part oscillates when the musician plays and influence the air flow entering the instrument. The parameter $\gamma$ is proportional to the pressure inside the mouth. The function ${\cal G}$ is the air flow through the clarinet, it depends in a nonlinear manner on the pressure difference between the player's mouth ($\gamma$) and the resonator ($p$). Here, $B$ is the time derivative operator. It is necessary to blow hard enough in order to obtain a sound, in other words the dynamical system encounters a bifurcation for some particular value of $\gamma$. It is important to understand how the oscillating frequency of the sound depends on the parameters of the instrument and the instrumentalist. Form example, obtaining this frequency will help understand how a beginner should act to avoid a high frequency squealing oscillation. Since $A$ is a differential operator one can always find a solution $p$, well defined as a distribution (see e.g. Schwartz~\cite{Sch} section VII-10 and the examples therein). 
Let ${\widetilde p}$ be the Green function (distribution), solution of $A\widetilde{p}=\delta(x-\xi)\delta(t)$ and the boundary conditions. Thus $p$ is given by the convolution $p=\widetilde{p}*\delta(x-x_0)B{\cal G}(\gamma,p)$. Let $p_0\in L^2(T)$ be the value of $p$ at $x_0$.
Let us assume that $B$ and ${\cal G}$ are decoupled from the space variable i.e. they are such that:
$$\int_X\delta(x-x_0)B{\cal G}(\gamma,p)dx=B{\cal G}(\gamma,p_0).
$$
It is the case for the clarinet where the air flow ${\cal G}$ at $x_0$ depends only on the pressure in the player's mouth ($\gamma$), the internal pressure $p$ at $x_0$ and on the size of the opening at this point.
Let us further assume that $A$ and ${\cal G}$ possess the properties allowing to write:
\begin{align}
p(x,t)&=\int_{X\times T}\widetilde{p}(x,t-\tau;\xi)\delta(\xi-x_0)\left.B{\cal G}(\gamma,p)\right|_{\xi,\tau}d\xi d\tau\nonumber\\
&=\int_{ T}\widetilde{p}(x,t-\tau;x_0)\left.B{\cal G}(\gamma,p_0)\right|_{\tau}d\tau.
\end{align}
As a consequence of the above equation, there exists an operator $\widetilde{Z}:L^2(T)\to{\cal X}$ such that:
\begin{align}\label{eqPZG}
 p=\widetilde{Z}{\cal G}(\gamma,p_0).
\end{align}
Notice that the action of $\widetilde{Z}$ implies a convolution with respect to time, which becomes a multiplication by Fourier transform. Expressing~\eqref{eqPZG} at $x=x_0$, one can introduce the operator $Z=\widetilde{Z}|_{x_0}$, where $Z:L^2(T)\to L^2(T)$, and the study is hence reduced to the investigation of the oscillating solutions belonging to $L^2(T)$ of:
\begin{align}\label{eqreel}
p_0=Z{\cal G}(\gamma,p_0).
\end{align}
Equation~\eqref{eqdep} is a formal expression whereas~\eqref{eqreel} is rigorously defined in the following and is the starting point of the study. The woodwind instruments have been modelled by physicists in the framework of control theory where the equations are written in term of transfer functions. These latter quantities are the Laplace transform of some differential equations, and differential operators become multiplication operators under this transformation. The Fourier transform of the operator $Z$ in~\eqref{eqreel} is a multiplicative operator and~\eqref{eqreel} follows the control theory formalism. Thus, the present study is oriented toward control theory, however, with appropriate assumptions, it also applies to the usual description with partial differential equations.

The nonlinear vector valued function ${\cal G}:(\R\times L^2(T))\to L^2(T)$ has the particular shape:
\begin{align}\label{eqdep2}
{\cal G}(\gamma,p_0)&=Lp_0+\sum_{n=2}^{\infty}R_n(\gamma,p_0),
\end{align}
where the linear operator $L$ depends on $\gamma$ and each $R_n$ corresponds to a monomial of degree $n$ in $p_0$. More precisely, for each $n\ge2$, let $Z_{n,1}$, $Z_{n,2},\cdots$ $Z_{n,n}$ be bounded linear operators from $L^2(T)$ to itself. Each operator may depend on $\gamma$. It is assumed that the nonlinear terms can be written as
\begin{align}
R_n(\gamma,p_0)=(Z_{n,1}p_0)(Z_{n,2}p_0)\cdots (Z_{n,n}p_0).
\end{align}
This expression of the nonlinearity allows to cover the case of woodwind musical instruments as it will be shown in section~\ref{woodwind} as well as different types of delay differential equations.
\begin{Remark}\label{rmk1}
Let some dynamical system be described by the standard notation:
\begin{equation}\label{eqstd}
\frac{dp_0}{dt}=L_2p_0+g(\gamma,p_0)
\end{equation}
where $L_2$ is the linear part and $g$ the nonlinear mapping. See~\cite{GH} for examples of assumptions on theses objects, \cite{HI} for the infinite dimensional case, or~\cite{CR} for the case of the Hopf bifurcation. The connection with this study is made by writing formally $Z^{-1}(1-ZL)=L_2-d/dt$. By Fourier transform the time derivative operator becomes the product by some  complex number $i\omega$, $\omega\in\R$. Looking for a pure imaginary eigenvalue of $L_2$ is the same problem as searching for a zero eigenvalue of $Z^{-1}(1-ZL)$.
In order to prove the existence of a Hopf bifurcation, the norm of the resolvent $(L_2\pm i\omega)^{-1}$ must be bounded by some constant$/|\omega|$ when the spectral parameter $\pm i\omega$ tends to complex infinity (see~\cite{HI}), a property which does not arise in hyperbolic PDEs. Here the inverse of $Z^{-1}(1-ZL)$ is only required to be bounded for each $\gamma$ in a neighborhood of the bifurcation point, except at this critical point. However, this decrease of the norm of the resolvent is replaced here by the assumption of a twice continuously differentiable periodic solution.
\end{Remark}

Let $L^{\infty}(T)$ be the space of bounded functions on $T$ and let ${\cal B}_{\varepsilon}\subset L^{\infty}(T)$ be the ball of radius $\varepsilon>0$ centered at zero. Let $C^2(T)$ be the set of functions twice continuously differentiable with respect to $t\in T$ and define
$$\Lambda={\cal B}_{\varepsilon}\cap C^2(T).$$
The solution of~\eqref{eqreel} under investigation satisfies the following hypothesis:
\begin{itemize}
\item[\Hy1] For all $\varepsilon\in(0,1)$ there exists a non-empty, open, simply connected set $\Omega_1\subset\R$, where for all $\gamma\in\Omega_1$ the system has a unique non trivial real-valued periodic solution belonging to $\Lambda$, continuous with respect to $\gamma$. Its period will be denoted by $\tau=\tau(\gamma)$. There exists a $\gamma_0\in\R$, with $\gamma_0\in\Omega$ the closure of $\Omega_1$, such that the periodic solution tends to zero when $\gamma$ tends to $\gamma_0$. The point $\gamma_0$ will be called the critical point or the bifurcation point.
\end{itemize}

As said in the introduction, and stated in \Hy1, the regularity of the solution is an hypothesis. Nevertheless, several publications have shown regularity properties of the solutions of nonlinear hyperbolic PDEs. For instance~\cite{GS} shows that a twice continuously differentiable solution with respect to time exists for a model of the wave equation with damping and source term, this solution being a distribution with respect to the space variable. In~\cite{CLT}, for a slightly different model, the solution is twice continuously differentiable with respect to time and, for all time fixed in a bounded interval, it belongs to a Sobolev space. The periodic solutions of the nonlinear wave equation are even more regular for some class of nonlinear terms as shown in~\cite{Co} and references therein. To the knowledge of the author, no such results have been settled in the case of a nonlinear function localized at one point in space. However, the model described by~\eqref{eqdep}, where $A$ represents the wave equation in a tube with damping, has been investigated theoretically~\cite{SKVG,RGKSV} and numerically~\cite{GKN,CV} by physicists, giving results compatible with this regularity hypothesis, and motivating the present approach.

Let us introduce the notation
$$J_{\gamma}=Z^{-1}(1-ZL),
$$
and assume:
\begin{itemize}
\item[\Hy2] 
\begin{itemize} 
\item[1)] $J_{\gamma}$ is a closed operator acting in $L^2(T)$. It is diagonalizable and its eigenvectors consist of the vectors of the Fourier basis of $L^2(T)$. 
\item[2)] Let $Q_e$, $Q_{-e}$ be the orthogonal projections associated to the vectors $\psi_e$, $\psi_{-e}$, respectively, of the Fourier basis. $J_{\gamma}$ is such that
$$Q_eJ_{\gamma}Q_e=\beta_e(\gamma)Q_e,\quad Q_{-e}J_{\gamma}Q_{-e}=\beta_{-e}(\gamma)Q_{-e}
$$
where $\beta_e(\gamma)$, $\beta_{-e}(\gamma)$ are two complex conjugate eigenvalues of $J_{\gamma}$ such that $\beta_e(\gamma_0)=0$. Moreover, for all $\gamma\in\Omega$, the operator $QJ_{\gamma}Q$, where $Q=(1-(Q_e+Q_{-e}))$, possesses a bounded inverse.
\end{itemize}
\end{itemize}
On the nonlinear part, the hypotheses are:
\begin{itemize}
\item[\Hy3] For all $\gamma\in\Omega$ and $n,m\in\N$, $n\ge2$, $m\in[1,n]$, $Z_{n,m}(\gamma):L^2(T)\to L^2(T)$ is a bounded linear operator. Moreover, as for $J_{\gamma}$, the Fourier basis is its eigenbasis. For all $n\ge2$, $\gamma\in\Omega$ and $p\in\Lambda$, by the H\"older inequality, the quantity $R_n(\gamma,p_0)$ belongs to $L^2(T)$, since the $\{Z_{n,m}\}$ are bounded and $\|p_0\|_{\infty}\le\varepsilon$, where $\|\cdot\|_{\infty}$ denotes the norm associated to $L^{\infty}(T)$.
Eventually, it is supposed that for all $\gamma\in\Omega$ and $p_0\in\Lambda$, the series of nonlinear terms is convergent :
$$\lim_{M\to\infty}\sum_{n=2}^M R_n(\gamma,p_0)=\overline{R}(\gamma,p_0)\in L^2(T).
$$
\end{itemize}
\begin{Remark}
If for all $\gamma\in\Omega$, ${\cal G}$ is an analytic function in its second variable, then it satisfies the above hypothesis: all the $Z_{n,m}$ are identity operators except when $m=1$, where $Z_{n,1}$ is the term of the Taylor expansion $Z_{n,1}={\cal G}^{(n)}(\gamma,0)/n!$, with ${\cal G}^{(n)}$ denoting the $n$th derivative.
\end{Remark}

Since the solution is periodic in time, it is natural to work in the Fourier space. Let $S^{\tau}$ be the circle of perimeter $\tau$, then $ T=S^{\tau}$. Let us define $\ell^2$, the Hilbert space of sequences which are square summable. Let ${\cal F}$ denotes the Fourier transform operator from $L^2(T)$ to $\ell^2$. The Fourier transform of $p_0$ will be denoted $\widehat{p_0}={\cal F} p_0\in\ell^2$ or, using the physicists's convention, by a capital letter $P=\widehat{p_0}$. For all $q\in\Z$ and $\omega_q=\frac{2\pi q}{\tau}$, the Fourier transform is defined by :
\begin{align*}
P_q=\widehat{p}(\omega_q)=({\cal F}p_0)(\omega_q)=\frac{1}{\tau}\int_{S^{\tau}}p_0(t)e^{-i\omega_q t}dt,
\end{align*}
and the inverse transform at time $t\in S^{\tau}$ is:
\begin{align*}
({\cal F}^{-1}P)(t)=\sum_{q\in\Z}P_qe^{i\omega_q t}.
\end{align*}
Using integration by part in the expression of the Fourier transform, the hypothesis of twice continuously differentiable solutions of \Hy1 implies the existence of $a_0>0$ such that:
\begin{align}\label{Pdecr}
\forall q\neq0,\quad|P_q|\le \frac{a_0}{|q|^2}.
\end{align}
Moreover, from \Hy1 and the definition of the Fourier transform, the small amplitude condition gives for all $\gamma\in\Omega_0$:
\begin{equation}\label{boundPq1}
\sup_{q\in\Z}|P_q|\le \varepsilon.
\end{equation}

\section{Expression in the frequency domain}
Many physical systems having an oscillating behavior, for instance in mechanics, acoustics or electrical engineering are described by relations taking place in the frequency domain. This is the formalism of transfer functions. Moreover, investigating oscillations turns out to be extremely efficient when using a frequency domain approach. In finite dimension, the harmonic balance method or the describing function method are such techniques. In infinite dimension, these techniques have not been proved to hold. This section, with lemma~\ref{lemmetf}, shows how the infinite dimensional system under study can be written in the frequency domain. It is the first step toward its analysis using frequency domain techniques.

The key point is to express the nonlinear relationship in the Fourier basis. It will become a combination of convolutions. Let $\widehat{g},\widehat{h}\in \ell^2$ be such that their harmonics decrease like in~\eqref{Pdecr}, in consequence they belong to $\ell^1$, the space of summable sequences. Thus, the convolution $\widehat{g}*\widehat{h}$ defined at $q\in\Z$ by :
$$(\widehat{g}*\widehat{h})(q)=\sum_{n\in\Z}\widehat{h}_{q-n}\widehat{g}_n,
$$
is an $\ell^1$ function by Young's inequality, see e.g.~\cite[Sec. IX.4]{RS2}.
For each $Z_{n,m}$ defined in \Hy3, let us introduce the notation:
\begin{equation*}
 \widehat{Z_{n,m}}={\cal F}Z_{n,m}{\cal F}^{-1}.
\end{equation*}
From \Hy3, since each $Z_{n,m}$ is diagonal in the Fourier domain, one has for the value of $P$ associated to the vector $\psi_q$ of the Fourier basis:
\begin{align*}
({\cal F}Z_{n,m}p)(q)=\widehat{Z_{n,m}}(q)P_q
\end{align*}
where $\widehat{Z_{n,m}}(q)$ is a complex number.

More generally, let $p,u_1,u_2,\cdots,u_{N-1}$ be $N\ge2$ real periodic functions depending on time $t\in S^{\tau}$. Let these functions describe the state of some physical system under study. Let $P,U^{(1)},U^{(2)},\cdots,U^{(N-1)}$ be the Fourier transforms of $p,u_{1},u_{2},\cdots,u_{N-1}$ respectively. 
Suppose they are related together through $N-1$ linear relations, in the frequency domain, involving transfer functions:
\begin{align}\label{UZP1}
U^{(m)}=\widehat{Z_m}P, 
\end{align}
where the $\{\widehat{Z_m}\}$ are bounded operators. All may depend on the bifurcation parameter. Let us suppose there is also one nonlinear relation, often expressed in the time domain:
\begin{align}\label{nonlinex}
{\cal N}(\gamma,p,u_1,u_2,\cdots,u_{N-1})=0.
\end{align}
If this relationship involves polynomial terms between the variables, such as $p^2$, $pu_1$ or $u_1^2u_2$, the following lemma can be used to express this relation in the frequency domain, with convolution products. For example, a cubic term involving three functions $p$, $u_1=Z_1p$, $u_2=Z_2p$, with $p$ satisfying \Hy1 and $Z_1,Z_2$ satisfying \Hy3 would give :
$$\widehat{pu_1u_2}=\widehat{p}*\widehat{u_1}*\widehat{u_2}=\widehat{p}*(\widehat{Z_1}\widehat{p})*(\widehat{Z_2}\widehat{p}).
$$
And for all $q\in\Z$:
$$\left\{\widehat{p}*(\widehat{Z_1}\widehat{p})*(\widehat{Z_2}\widehat{p})\right\}(\omega_q)=\sum_{k_1\in\Z}\sum_{k_2\in\Z}\widehat{Z_1}(\omega_{k_1}-\omega_{k_2})\widehat{Z_2}(\omega_{k_2})P_{q-k_1}P_{k_1-k_2}P_{k_2}.
$$
The whole relation~\eqref{nonlinex} can be stated under a set of harmonic balance equations, in the frequency domain, given by~\eqref{eqfreq}.

\begin{Lemma}\label{lemmetf}
Suppose $p_0\in\Lambda$ and $P$ denotes its Fourier transform. Under the hypotheses \Hy2, \Hy3, for all $\gamma\in\Omega$, all $n\in\N$, $n\ge2$,
$${\cal F}R_n(\gamma,p_0)=S(g_n,n,P),
$$
where $S(g_n,n,P)\in \ell^1$. For all $q\in\Z$ the elements of $S(g_n,n,P)$ are defined by :
\begin{equation}\label{defS}
S_q(g_n,n,P)=\sum_{k_1\in\Z}\sum_{k_2\in\Z}\cdots\sum_{k_{n-1}\in\Z}g_n(q,k_1,\cdots,k_{n-1})P_{q-k_1}P_{k_1-k_2}\cdots P_{k_{n-1}},
\end{equation}
where for all $n$, $g_n$ is a complex-valued function of $n$ variables depending on combinations of $Z_{n,m}$:
\begin{align*}
g_n(q,k_1,\cdots,k_{n-1})&=\widehat{Z_{n,1}}(q-k_1)\widehat{Z_{n,2}}(k_1-k_2)\cdots\widehat{Z_{n,n}}(k_{n-1}).
\end{align*}
Moreover, the infinite sum gives:
$${\cal F}\sum_{n=2}^{\infty}R_n(\gamma,p_0)=\sum_{n=2}^{\infty}S(g_n,n,P)\in\ell^2.
$$
In consequence, solving the system described by~\eqref{eqreel} is equivalent to solving:
\begin{align}\label{eqfreq}
\widehat{J_{\gamma}}P=\sum_{n=2}^{\infty}S(g_n,n,P).
\end{align}
\end{Lemma}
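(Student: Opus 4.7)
The plan is to handle the lemma in three layers: (i) for fixed $n$, identify the Fourier transform of the single monomial $R_n(\gamma,p_0)$ with the iterated convolution giving $S(g_n,n,P)$; (ii) show each $S(g_n,n,P)$ lies in $\ell^1$; (iii) pass to the infinite sum and rewrite \eqref{eqreel} as \eqref{eqfreq}.

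First I would set up the $\ell^1$ machinery. By \Hy1 and the integration-by-parts bound \eqref{Pdecr}, $P=\widehat{p_0}\in\ell^1$. Since, by \Hy3, each $Z_{n,m}$ is diagonal in the Fourier basis, the Fourier coefficients of $Z_{n,m}p_0$ are $\widehat{Z_{n,m}}(q)P_q$, and the symbols $\widehat{Z_{n,m}}(q)$ are uniformly bounded in $q$ because $Z_{n,m}$ is a bounded operator on $L^2(T)$ with the Fourier basis as eigenbasis. Hence each $\widehat{Z_{n,m}p_0}$ belongs to $\ell^1$.

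Next I would obtain the explicit formula. The pointwise product $R_n(\gamma,p_0)=\prod_{m=1}^n (Z_{n,m}p_0)$ is well defined since $p_0\in L^\infty(T)$ and each $Z_{n,m}p_0$ is in $L^\infty$ (its Fourier series converges absolutely by the $\ell^1$ property above). The convolution theorem then yields, by induction on $n$,
\begin{equation*}
\mathcal{F}R_n(\gamma,p_0)=\widehat{Z_{n,1}p_0}\,*\,\widehat{Z_{n,2}p_0}\,*\cdots*\,\widehat{Z_{n,n}p_0},
\end{equation*}
and unrolling the convolutions at index $q$ produces exactly \eqref{defS} with the stated $g_n$. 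To show $S(g_n,n,P)\in\ell^1$, I would apply Young's inequality iteratively: each factor $\widehat{Z_{n,m}p_0}$ is bounded in $\ell^1$ by $\|\widehat{Z_{n,m}}\|_\infty\|P\|_{\ell^1}$, and the convolution of $n$ sequences in $\ell^1$ is in $\ell^1$ with norm bounded by the product of norms.

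Finally I would handle the infinite sum. By \Hy3, $\sum_{n=2}^{M}R_n(\gamma,p_0)$ converges in $L^2(T)$ to $\overline{R}(\gamma,p_0)$, and $\mathcal{F}$ is an isometry from $L^2(T)$ onto $\ell^2$, so $\sum_{n=2}^\infty S(g_n,n,P)=\mathcal{F}\overline{R}(\gamma,p_0)\in\ell^2$. For the equivalence with \eqref{eqreel}, I would substitute \eqref{eqdep2} into \eqref{eqreel} to get $p_0=Z Lp_0+Z\sum_{n\ge2}R_n$, rearrange as $(Z^{-1}-L)p_0=\sum_{n\ge2}R_n$, i.e.\ $J_\gamma p_0=\sum_{n\ge2}R_n(\gamma,p_0)$, and apply $\mathcal{F}$; the diagonal action of $J_\gamma$ in the Fourier basis (from \Hy2) turns the left-hand side into $\widehat{J_\gamma}P$, yielding \eqref{eqfreq}. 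Conversely, any $P\in\ell^2$ solving \eqref{eqfreq} with $\mathcal{F}^{-1}P\in\Lambda$ inverts back to a solution of \eqref{eqreel}.

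The main obstacle I anticipate is purely bookkeeping: justifying the iterated convolution identity requires that the finite products are genuine pointwise products of $L^\infty$ functions whose Fourier series converge absolutely, and that each partial convolution stays in $\ell^1$ so Fubini can reshuffle the $n-1$ summations into the form \eqref{defS}. Once $P\in\ell^1$ is exploited through \eqref{Pdecr} and the operator symbols are bounded, the rest of the estimates are standard Young-inequality applications, and the passage to the infinite sum over $n$ is handed to us by \Hy3.
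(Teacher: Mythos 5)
Your proposal is correct and follows essentially the same route as the paper: exploit \eqref{Pdecr} to place $P$ (and each $\widehat{Z_{n,m}p_0}$) in $\ell^1$, use Fubini together with Young's inequality to identify ${\cal F}R_n$ with the iterated convolution and keep it in $\ell^1$, then pass to the infinite sum by the $L^2$-convergence from \Hy3 and the isometry of ${\cal F}$. The only cosmetic differences are that you absorb the symbols $\widehat{Z_{n,m}}$ into the convolved sequences rather than factoring out $\sup|g_n|$ as the paper does, and you spell out the final substitution $p_0=Z{\cal G}(\gamma,p_0)\Rightarrow J_\gamma p_0=\sum R_n$ that the paper leaves implicit.
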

Notice that although not specified, the convolution $S$ depends on $\gamma$.
\begin{proof}[Proof of lemma~\ref{lemmetf}]
Let $G^{(1)},G^{(2)},\cdots,G^{(n+1)}\in \ell^1$, $n\in\N$, $n\ge1$, then the $n$-product of convolution $G^{(1)}*G^{(2)}*\cdots*G^{(n+1)}$ is an $\ell^1$ function, by Young inequality. 
Suppose $g,h\in L^2(T)$ such that their respective Fourier transform $G,H$ are $\ell^1$ functions. Then Fubini's theorem can be used to get:
\begin{align}\label{mulconv}
\sum_{q\in\Z} \sum_{k\in\Z}H_{q-k}G_ke^{i\omega_qt}&= \sum_{q\in\Z} \sum_{k\in\Z}H_{q-k}e^{i\omega_{q-k}t}G_ke^{i\omega_kt}=\sum_{k\in\Z} \sum_{q\in\Z}H_{q-k}e^{i\omega_{q-k}t}G_ke^{i\omega_kt}\nonumber\\
&=\sum_{k\in\Z}h(t)G_ke^{i\omega_kt}=h(t)g(t).
\end{align}
Notice that the functions $h$, $g$ are continuous by the Riemann-Lebesgue lemma. Reciprocally, for $h$, $g\in L^2(T)$ twice continuously differentiable, $H$ and $G$ are $\ell^1$ functions (see explanation of~\eqref{Pdecr}) and this equality holds.
In consequence, Eq.~\eqref{mulconv} shows the correspondence between multiplication in the time domain and convolution in the frequency domain of a finite number of products, in the case of twice continuously differentiable functions on the compact domain $T$.
Since \Hy3 holds, all the $\{Z_{n,m}\}$ are bounded and for all $n\ge2$, $g_n$ is bounded. The bound may depend on $n$.
Using H\"older inequality, one gets:
$$\|R_n(\gamma,p)\|_{L^2(T)}\le \|Z_{n,1}\cdots Z_{n,n}\|\cdot(\|p\|_{\infty})^{n-1}\cdot\|p\|_{L^2(T)},
$$
which shows that $R_n(\gamma,p)$ belongs to $L^2(T)$. The symbol $\|\cdot\|$ denotes the operator norm.
Suppose $S(1,n,P)\in\ell^1$, then
$$S(1,n+1,P)=P*S(1,n,P)\in\ell^1
$$
and since $S(1,2,P)\in\ell^1$, by induction, for all $n\ge2$, $S(1,n,P)\in\ell^1$.
Then, one has:
$$\|S(g_n,n,P)\|_{\ell^1}\le \sup|g_n|\cdot\|S(1,n,P)\|_{\ell^1}.
$$
In consequence,~\eqref{mulconv} shows that for $n\ge2$ fixed, $S(g_n,n,P)$ is the Fourier transform of $R_n(\gamma,p)$. Let us study the limit $n\to\infty$. Since the Fourier transform is bounded, and even unitary one has
$$\overline{S}(P)={\cal F}\overline{R}(\gamma,p)\in\ell^2,
$$
and the quantity:
$$\left\|\overline{S}(P)-\sum_{n=2}^MS(g_n,n,P)\right\|_{\ell^2}\le \left\|\overline{R}(\gamma,p)-\sum_{n=2}^MR_n(\gamma,p)\right\|_{L^2(T)}
$$
tends to zero as $M$ tends to infinity from \Hy3.
\end{proof}

\section{Preliminary results and properties at threshold}\label{threshold}
This section demonstrates how the method based on the frequency domain approach can be used to investigate a bifurcation in a nonlinear system. It shows how to handle the hypotheses (here \H1, \H2 and \Hy3) and the Fourier harmonics in order to control infinite sums and convolutions.
The first result, stated in the theorem which follows, concerns the determination of the birth point of the oscillation and its connection to particular eigenvalues of $J_{\gamma}$.
The second result gives an estimation of the harmonics and shows the importance of fundamental component. This serves as an introduction to Th.~\ref{Th1} of section~\ref{decrease} which will go further in this estimation.
This section settles many important intermediate results which will be used in the following parts of the study.
\begin{Theorem}\label{Thbif}
Suppose \Hy1, \Hy2 and \Hy3 hold. Then the bifurcation point $\gamma_0$ where the oscillating solution starts and the frequency $\omega_e$ of oscillation at this point are given by the characteristic equation:
$$\beta_e(\gamma_0)=0.
$$
Moreover, for all $\gamma\in\Omega$ where $\varepsilon$ is small enough: the fundamental component of the oscillation $P_e$ is such that $|P_e|=\varepsilon$, and there exists $c_0>0$ for the harmonics such that for all $q\in\{ne\}_{n\in\Z}\backslash\left\{e,-e\right\}$ , $|P_q|\le c_0\varepsilon^2$.
\end{Theorem}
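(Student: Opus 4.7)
The natural arena is the Fourier domain. Lemma~\ref{lemmetf} allows me to rewrite \eqref{eqreel} as $\widehat{J_{\gamma}}P = \sum_{n\ge 2} S(g_n,n,P)$ and then, by projecting onto each Fourier basis vector $\psi_q$ and using that $J_\gamma$ is diagonal by \Hy2(1), as the infinite family of scalar equations
\begin{align*}
\beta_q(\gamma)\,P_q = \sum_{n\ge 2} S_q(g_n,n,P), \qquad q \in \Z.
\end{align*}
The strategy is to read the characteristic equation off the critical mode $q=e$, and to bound the remaining multiples of $e$ by inverting $J_\gamma$ on the complement of the critical subspace.

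The first step is a uniform $O(\varepsilon^2)$ bound on the right-hand side. Since $p_0 \in \Lambda \subset {\cal B}_\varepsilon$, H\"older's inequality (exactly as in the proof of Lemma~\ref{lemmetf}) gives $\|R_n(\gamma,p_0)\|_{L^2(T)} \le \|Z_{n,1}\|\cdots\|Z_{n,n}\|\,\varepsilon^{n-1}\|p_0\|_{L^2(T)}$, and Parseval's identity passes this to the Fourier side as $|S_q(g_n,n,P)|=O(\varepsilon^n)$ uniformly in $q$; summing in $n$ via \Hy3 yields $\bigl|\sum_{n\ge 2} S_q(g_n,n,P)\bigr| \le C\varepsilon^2$ for all $q \in \Z$ and all $\gamma \in \Omega$. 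For the critical mode I project onto $\psi_e$ and use the amplitude normalization $|P_e| = \varepsilon$ (legitimate because \Hy1 furnishes a continuous one-parameter family of nontrivial solutions vanishing at $\gamma_0$; the map $\gamma\mapsto|P_e(\gamma)|$ is continuous and reaches every sufficiently small positive value, so we may reparameterize by $\varepsilon=|P_e|$), obtaining $|\beta_e(\gamma)|\,\varepsilon \le C\varepsilon^2$, i.e.\ $|\beta_e(\gamma)| \le C\varepsilon$. Letting $\varepsilon \to 0$ forces $\gamma \to \gamma_0$ by \Hy1, and continuity of the eigenvalue $\beta_e$ in $\gamma$ yields $\beta_e(\gamma_0)=0$; taking complex conjugates also gives $\beta_{-e}(\gamma_0)=0$, and the frequency at threshold is then $\omega_e = 2\pi e/\tau(\gamma_0)$.

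For $q \in \{ne\}_{n\in\Z}\setminus\{e,-e\}$ the key ingredient is \Hy2(2): $QJ_\gamma Q$ has a bounded inverse on $\mathrm{Ran}\,Q$, which, since $J_\gamma$ is diagonal in the Fourier basis, is equivalent to the uniform spectral gap $|\beta_q(\gamma)| \ge \|(QJ_\gamma Q)^{-1}\|^{-1} > 0$ for every such $q$ and every $\gamma \in \Omega$. The scalar equation can then be solved as $P_q = \beta_q(\gamma)^{-1}\sum_{n\ge 2} S_q(g_n,n,P)$, and combining the $O(\varepsilon^2)$ estimate for the numerator with the uniform lower bound for the denominator yields the announced bound $|P_q| \le c_0\varepsilon^2$.

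The main obstacle is the uniform-in-$q$ control of the infinite nonlinear sum: each $S_q$ is a series of $n$-fold convolutions whose coefficient $\sup|g_n|$ may depend on $n$, so the $O(\varepsilon^2)$ estimate must be obtained by cleanly interchanging summation in $n$ with the supremum in $q$. This is exactly where the inequality $\|S(g_n,n,P)\|_{\ell^1} \le \sup|g_n|\cdot\|S(1,n,P)\|_{\ell^1}$ established in Lemma~\ref{lemmetf} must be combined with the $\ell^1$-membership of $\widehat{p_0}$ guaranteed by the decay \eqref{Pdecr} and with the $L^2$-convergence of $\sum_{n} R_n$ in \Hy3. A secondary delicate point is justifying the normalization $|P_e|=\varepsilon$ together with the passage to the limit $\gamma\to\gamma_0$: continuity and local surjectivity of $\gamma\mapsto|P_e(\gamma)|$ onto a small interval $(0,\varepsilon_0)$ have to be extracted from \Hy1, so that the family of solutions is genuinely parameterized by $\varepsilon$ rather than by $\gamma$.
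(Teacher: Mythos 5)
Your route is genuinely different from the paper's. The paper proves deliberately weak per-step convolution bounds (Lemmas~\ref{lemmeS1},~\ref{lemmeSn1} give only $|S_q(1,n,P)|\le\varepsilon^{1+(n-1)/8}$, far short of $\varepsilon^n$) because it must track decay in $|q|$ alongside amplitude for later use in Theorem~\ref{Th1}, and then recovers the quadratic order by iterated bootstrap inside Lemma~\ref{lemmeosc}; you instead take a one-shot $O(\varepsilon^n)$ bound on each $\|R_n\|_{L^2}$ via H\"older and transfer it to the Fourier side by Parseval, which is cleaner and does suffice for Theorem~\ref{Thbif} where no decay in $q$ is asserted. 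The key gap, however, is that the normalization $|P_e|=\varepsilon$ is a \emph{conclusion} of Lemma~\ref{lemmeosc}(ii), not something \Hy1 entitles you to assume. \Hy1 gives only $\sup_q|P_q|\le\varepsilon$; it says nothing about $P_e\neq0$, still less that $P_e$ is the dominant component, and your appeal to ``continuity and local surjectivity of $\gamma\mapsto|P_e(\gamma)|$'' is simply not supplied by \Hy1 — that map could a priori vanish identically. The correct order of operations, which your own estimates would support, is: starting from $\sup_q|P_q|\le\varepsilon$, prove $|P_q|\le c_0\varepsilon^2$ for all $q\neq\pm e$ using invertibility of $QJ_\gamma Q$; observe that once $c_0\varepsilon<1$ the supremum of $|P_q|$ is forced onto $q=\pm e$, so $P_e\neq0$ (otherwise a further pass kills the whole sequence, contradicting nontriviality in \Hy1); only then rename $\varepsilon:=|P_e|$. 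Stipulating $|P_e|=\varepsilon$ up front and then feeding it into $|\beta_e(\gamma)|\,\varepsilon\le C\varepsilon^2$ is circular.

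Two further points. You never establish $P_\beta=0$ for $\beta\notin\{me\}_{m\in\Z}$, which is Lemma~\ref{lemmeosc}(iii) and which the paper's proof invokes explicitly to conclude that $\omega_e$ — and not some submultiple — is the fundamental frequency; the argument (for such $\beta$, every summand in $S_\beta$ carries a factor $P_j$ with $j\notin\{me\}$, so the bound improves on each pass and forces $P_\beta=0$) must actually be run and does not follow automatically from $|P_q|\le c_0\varepsilon^2$. And you flagged but did not close the $n$-summation issue: the constants $C_n$ in $\|R_n\|_{L^2}\le C_n\varepsilon^n$ are not controlled by \Hy3, so $\sum_n C_n\varepsilon^n$ need not be $O(\varepsilon^2)$; the paper's fix is to truncate at a fixed $M$, treat $\sum_{n=2}^{M}C_n\varepsilon^n=O(\varepsilon^2)$ for that fixed $M$, and absorb the $\ell^2$ tail using the convergence of $\sum_n R_n$ guaranteed in \Hy3 — and you should do the same rather than sum over all $n$ at once.
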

\begin{Remark}
The quantities $e$, $\varepsilon$ and $\gamma$ are dependent of each other and this dependence will be made explicit in the section~\ref{stable}.
\end{Remark}

\begin{Remark}
In the case of the Hopf bifurcation, the condition of two purely imaginary eigenvalues of the Jacobian operator at the bifurcation determines $\gamma_0$ and the oscillating frequency $\omega_e$ at this point. In the present case, the result is equivalent but with the condition: two complex conjugate eigenvalues of $J_{\gamma}$ reach zero at $\gamma_0$ (see remark~\ref{rmk1}).
\end{Remark}

Before stating the proof of this theorem, several lemmas giving bounds on the different convolution products encountered are presented. The aim is to establish a bound on the right hand side of~\eqref{eqfreq} and show that this bound is smaller than the initial one on $|P_q|$ (Eqs.~\eqref{Pdecr} and~\eqref{boundPq1}), for particular values of $q$. The first lemma starts with an estimate on the convolution product, which is present in all the $S$ functions defined in~\eqref{defS}.
\begin{Lemma}\label{lemmeS1}
Let $a_1,a_2$ be positive constants and let $\lambda\in(0,1)$, $\theta\ge0$ and $\alpha>0$. Let $f,g:\Z\to\C$ be two bounded functions such that 
$$\forall q\in\Z,\quad |f(q)|\le\lambda,\quad |g(q)|\le\lambda^{1+\theta},
$$
and $\forall q\in\Z^*$,
$$ |f(q)|\le\frac{a_1}{|q|^{2}},\quad |g(q)|\le\frac{a_2}{|q|^{1+\alpha}}.
$$
Then, there exists $c_1>0$ and a positive continuous decreasing function $c_2:\R_+^*\to\R_+$ such that
\begin{itemize}
\item[i)]
$$\forall q\in\Z,\quad|(f*g)(q)|\le a_1^{3/4}c_1\cdot \lambda^{r},
$$
with
$$ r=1+\frac{1}{4}+\theta,
$$
\item[ii)]
$$\forall q\in\Z^*,\quad |(f*g)(q)|\le 3 \max\{a_2\lambda, a_1\lambda^{1+\theta} ,c_2(\alpha)\lambda^{\alpha/8}\} \cdot\frac{1}{|q|^{\sigma}},
$$
with $\sigma=1+\frac{\alpha}{4}$.
\end{itemize}
The function $c_2$ tends to infinity when $\alpha$ tends to zero.
\end{Lemma}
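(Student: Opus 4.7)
The plan is to control $(f*g)(q) = \sum_{k\in\Z} f(q-k)\,g(k)$ using two elementary devices in combination: a geometric-mean interpolation that mixes the uniform bound ($\lambda$, resp.\ $\lambda^{1+\theta}$) with the polynomial tail bound ($a_1/|m|^2$, resp.\ $a_2/|m|^{1+\alpha}$), and a dyadic splitting of the summation based on whether $|k|$ or $|q-k|$ exceeds $|q|/2$.

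For part~(i) I would use only the interpolation device. Applying the geometric mean to $f$ with weight $3/4$ on the decay bound yields
$$|f(m)| \le \lambda^{1/4}\left(\frac{a_1}{|m|^2}\right)^{3/4}=\frac{\lambda^{1/4} a_1^{3/4}}{|m|^{3/2}},\quad m\neq 0,$$
while keeping $|g(k)|\le\lambda^{1+\theta}$. Isolating the term $m=0$ (whose contribution $\lambda\cdot\lambda^{1+\theta}$ is absorbed into the target exponent thanks to $\lambda<1$) and summing the tail against $\sum_{m\neq 0}|m|^{-3/2}=2\zeta(3/2)$ gives the desired bound with $c_1 = 1+2\zeta(3/2)$ and $r=5/4+\theta$.

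For part~(ii) I would decompose $\Z = A\cup B$ with $A=\{k:|k|\le|q|/2\}$ and $B=\{k:|k|>|q|/2\}$. On $A$ the inequality $|q-k|\ge |q|/2$ lets me pull a decay factor from $f$; on $B$ the inequality $|k|>|q|/2$ lets me do the same with $g$. The three terms in the maximum should arise as follows. On $B$, write $|g(k)|\le a_2/|k|^{1+\alpha}\le 2^{1+\alpha/4}\,a_2/\bigl(|q|^{1+\alpha/4}\,|k|^{3\alpha/4}\bigr)$ and combine with an interpolated bound on $f$ chosen so that the residual sum converges; this produces the term $a_2\lambda/|q|^{1+\alpha/4}$. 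On $A$, bound $|f(q-k)|\le 4a_1/|q|^2\le 4a_1/|q|^{1+\alpha/4}$. The contribution at $k=0$ then uses $|g(0)|\le\lambda^{1+\theta}$ and produces the term $a_1\lambda^{1+\theta}/|q|^{1+\alpha/4}$. For the remaining $k\in A\setminus\{0\}$ I would interpolate $g$ as $|g(k)|\le \lambda^{(1+\theta)(1-\mu)}\,a_2^{\mu}/|k|^{(1+\alpha)\mu}$ with a parameter $\mu$ tuned so that the sum $\sum_{k\neq 0}|k|^{-(1+\alpha)\mu}$ is convergent; this is exactly where the $\alpha$-dependent constant $\zeta((1+\alpha)\mu)$ enters and diverges as $\alpha\to 0^+$, and the balance between the power of $\lambda$ and this divergence yields the term $c_2(\alpha)\lambda^{\alpha/8}/|q|^{1+\alpha/4}$.

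The main obstacle is the third term in the maximum: matching the precise exponent $\alpha/8$ on $\lambda$, the precise decay $|q|^{-(1+\alpha/4)}$, and the fact that \emph{all} the $\alpha$-dependence can be packaged into a single continuous decreasing function $c_2(\alpha)$ that blows up only as $\alpha\to 0^+$, requires a delicate choice of the interpolation parameter $\mu$ and careful bookkeeping on the partial sums over $A$. The remaining steps are routine verifications that the three contributions just described are indeed bounded uniformly by $3$ times the maximum, rather than their sum.
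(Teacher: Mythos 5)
Your part~(i) is essentially the paper's argument: isolate the index(es) where one of the factors fails to decay, interpolate $|f|^{1/4}\cdot|f|^{3/4}$, and sum the $3/2$-tail. (The paper isolates both $k=0$ and $k=q$ so as to put the sum over $k\neq0,q$; you isolate only $m=q-k=0$, which also works since in part~(i) no decay of $g$ is used and $k=0$ is harmless.)

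For part~(ii), your dyadic split $A=\{|k|\le|q|/2\}$, $B=\{|k|>|q|/2\}$ is a genuine alternative to the paper's device. The paper never splits the range of $k$; instead, after isolating $k=0$ and $k=q$, it factors the remaining sum as $\sup_k(|q-k|\,|k|)^{-\beta}\cdot\sum_k|q-k|^{-(2(1-\eta)-\beta)}|k|^{-(1+\alpha-\beta)}$, bounds the $\sup$ via $\min_{k\neq0,q}|q-k|\,|k|=\bigl||q|-1\bigr|\ge|q|/2$, and bounds the $\sum$ by a Young-type constant $\widetilde c(\alpha,\eta,\beta)$. Your version replaces that trick with a case analysis that exploits $\max(|k|,|q-k|)\ge|q|/2$ directly; both rest on the same dichotomy and both need the $\zeta$-type sum that blows up as $\alpha\to0^+$, so in the end the approaches are more similar than they look.

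There is, however, a real gap in your sketch for the $B$-contribution. You claim that interpolating $|f(q-k)|$ over $B$ and summing yields the term $a_2\lambda/|q|^{1+\alpha/4}$. That cannot be right: $k=q$ lies in $B$ and there $q-k=0$, so the interpolated bound $\lambda^{1-\nu}\bigl(a_1/|q-k|^2\bigr)^{\nu}$ is unusable; and on $B\setminus\{q\}$, making $\sum_{k}|q-k|^{-2\nu}|k|^{-3\alpha/4}$ converge for small $\alpha$ forces $\nu$ bounded away from $0$ (roughly $\nu>1/2-3\alpha/8$), so the interpolation only produces a factor $\lambda^{1-\nu}$, which is a \emph{larger} quantity than $\lambda$ when $\lambda<1$. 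In other words, the full power $\lambda^1$ can only come from the single index $k=q$, where $|f(0)|\le\lambda$ and $|g(q)|\le a_2/|q|^{1+\alpha}$; you must isolate $k=q$ from $B$ exactly as you isolate $k=0$ from $A$ (this is precisely what the paper does by removing both $k=0$ and $k=q$ before interpolating). The remaining sum over $B\setminus\{q\}$ then yields a $\lambda^{\text{small power}}/|q|^{1+\alpha/4}$ contribution which, like your $A\setminus\{0\}$ contribution, gets absorbed into the $c_2(\alpha)\lambda^{\alpha/8}$ term of the maximum. With that correction the three terms of the maximum line up cleanly with $\{k=q\}$, $\{k=0\}$, and $\Z\setminus\{0,q\}$, and your dyadic route goes through.
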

\begin{proof}
Let us introduce the notation $f_q=f(q)$ for $f:\Z\to\C$. Let $f,g$ be two functions satisfying the hypothesis. For all $q\in\Z$,
\begin{align*}
|\sum_{k\in\Z} f_{q-k}g_k|&\le 2\lambda^{2+\theta}+\sup_{k\in\Z^*,k\neq q}|f_{q-k}|^{1/4}|g_k|\sum_{k\in\Z^*,k\neq q} |f_{q-k}|^{3/4} \\
&\le 2\lambda^{2+\theta}+  \lambda^{1+1/4+\theta}a_1^{3/4}\sum_{k\in\Z^*,k\neq q} \frac{1}{|q-k|^{3/2}}\\
&\le a_1^{3/4}c_1 \lambda^{1+1/4+\theta},
\end{align*}
where $c_1>0$.

For $ii)$, for $q\in\Z^*$ and $\eta\in[0,\alpha/2)$, the following estimate can be written:
\begin{align*}
|\sum_{k\in\Z} f_{q-k}g_k|&\le \frac{a_2\lambda}{|q|^{1+\alpha}}+\frac{a_1\lambda^{1+\theta}}{|q|^{2}}+\lambda^{\eta}\sum_{k\in\Z^*,k\neq q} |f_{q-k}|^{1-\eta}|g_k| \\
&\le \frac{a_2\lambda}{|q|^{1+\alpha}}+\frac{a_1\lambda^{1+\theta}}{|q|^{2}}+\lambda^{\eta} \sum_{k\in\Z^*,k\neq q} \frac{a_1^{1-\eta}a_2}{|q-k|^{2(1-\eta)}|k|^{1+\alpha}}.
\end{align*}
For any $\beta\in[0,1+\alpha/2-\eta)$ one has the bound:
\begin{align*}
\sum_{k\in\Z^*,k\neq q} \frac{1}{|q-k|^{2(1-\eta)}|k|^{1+\alpha}}&\le \sup_{k\in\Z^*,k\neq q }\frac{1}{(|q-k||k|)^{\beta}}\sum_{k\in\Z^*,k\neq q} \frac{1}{|q-k|^{2(1-\eta)-\beta}|k|^{1+\alpha-\beta}}\\
&\le \widetilde{c}(\alpha,\eta,\beta)\sup_{k\in\Z^*,k\neq q }\frac{1}{(|q-k||k|)^{\beta}},\qquad \text{with }\widetilde{c}(\alpha,\eta,\beta)>0.
\end{align*}
The function $\widetilde{c}$ is given by applying Young's inequality under the condition $(3+\alpha-2\eta-2\beta>1)$. This condition is satisfied if $\beta<1+\alpha/2-\eta$ and $\alpha/2-\eta>-1$. The function $\widetilde{c}$ tends to infinity when $\alpha/2-\eta$ tends to $(-1)$.
The function $h_q(k)=|q-k||k|$ possesses two equal minima at $k=0$ and $k=q$ (and a local maximum at $k=q/2$). Therefore, for $k,q\in\Z$, $k\neq0$, $k\neq q$ and $|q|>1$, the minimum of the function $h_q$ is:
$$\min_{k\in\Z^*,k\neq q}h_q(k)=||q|-1|.
$$
Remark that for $q=\pm1$, the minimum is $1$ as $k\neq q$. For all $q$ such that $|q|>1$, one has $|q|/2\le||q|-1|$ and
\begin{align*}
\sum_{k\in\Z^*,k\neq q} \frac{1}{|q-k|^{2(1-\eta))}|k|^{1+\alpha}} &\le\frac{2^{\beta}\widetilde{c}(\alpha,\eta,\beta)}{|q|^{\beta}},\qquad q\neq0.
\end{align*}
For the case $q=\pm1$, the bound is simply $\widetilde{c}(\alpha,\eta,\beta)$. Setting $\eta=\alpha/8$ and $\beta=1+\alpha/4$ concludes the proof.
\end{proof}

The next lemma uses the previous one to state upper bounds on the value of the function $S$ defined in~\eqref{defS}.
\begin{Lemma}\label{lemmeSn1}
Suppose there exists $a_0>0$, $N\ge2$ and a small $\varepsilon=\varepsilon(N)\in(0,1)$ such that for all $q\in\Z$, 
\begin{itemize} 
\item[i)] $|P_q|\le\varepsilon$, 
\item[ii)]for all $q\in\Z^*$, $|P_q|\le\frac{a_0}{|q|^2}$.
\end{itemize}
Then for all $n\in\N,n\in[2,N]$, $q\in\Z$:
\begin{align}\label{boundS}
|S_q(1,n,P)|\le \varepsilon^{1+\theta_n},\qquad\theta_n=\frac{1}{8}(n-1).
\end{align}
\end{Lemma}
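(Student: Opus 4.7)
The plan is to proceed by induction on $n$, exploiting the recursive structure $S(1,n,P)=P*S(1,n-1,P)$ which follows by regrouping the multi-index sum in the definition \eqref{defS} (with $S(1,1,P)=P$). This identifies $S(1,n,P)$ as the $n$-fold convolution of $P$ and invites iterated use of Lemma~\ref{lemmeS1}.

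A single pointwise bound cannot drive the induction on its own, since Lemma~\ref{lemmeS1} demands on both of its arguments a uniform bound \emph{and} a polynomial-decay bound. I would therefore strengthen the inductive hypothesis to carry, alongside $|S_q(1,n,P)|\le\varepsilon^{1+\theta_n}$, a companion decay estimate
\[
|S_q(1,n,P)|\le \frac{A_n}{|q|^{1+\alpha_n}},\qquad q\neq 0,
\]
for some $\alpha_n>0$ and $A_n>0$. The base case $n=2$ is a direct application of Lemma~\ref{lemmeS1} to $f=g=P$, with $\lambda=\varepsilon$, $\theta=0$, $\alpha=1$, $a_1=a_2=a_0$: part i) yields $|S_q(1,2,P)|\le a_0^{3/4}c_1\,\varepsilon^{5/4}$, which is at most $\varepsilon^{1+1/8}$ for $\varepsilon$ small enough, and part ii) yields the decay with $\alpha_2=1/4$ and an explicit $A_2$.

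For the inductive step I apply Lemma~\ref{lemmeS1} with $f=P$ and $g=S(1,n-1,P)$, using $\lambda=\varepsilon$, $\theta=\theta_{n-1}$, $a_1=a_0$, $\alpha=\alpha_{n-1}$, $a_2=A_{n-1}$. Part i) produces
\[
|S_q(1,n,P)|\le a_0^{3/4}c_1\,\varepsilon^{\,1+1/4+\theta_{n-1}}.
\]
Substituting $\theta_{n-1}=(n-2)/8$, the exponent equals $1+\theta_n+1/8$, so the bound rewrites as $a_0^{3/4}c_1\,\varepsilon^{1/8}\cdot\varepsilon^{1+\theta_n}$, and is $\le\varepsilon^{1+\theta_n}$ provided $\varepsilon\le\varepsilon(N)$ is chosen small enough to ensure $a_0^{3/4}c_1\,\varepsilon^{1/8}\le 1$. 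Part ii) gives the decay with $\alpha_n=\alpha_{n-1}/4$ and $A_n=3\max\{A_{n-1}\varepsilon,\,a_0\varepsilon^{1+\theta_{n-1}},\,c_2(\alpha_{n-1})\varepsilon^{\alpha_{n-1}/8}\}$, which suffices to continue the induction up to $n=N$.

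The main obstacle is not analytic but bookkeeping: the claimed exponent $\theta_n=(n-1)/8$ is deliberately \emph{weaker} than the increment $1/4$ per step produced by part i) of Lemma~\ref{lemmeS1}. This slack of $1/4-1/8=1/8$ in the $\varepsilon$-exponent at each step is exactly what absorbs the multiplicative constant $a_0^{3/4}c_1$ uniformly in $n$, so that one small enough $\varepsilon=\varepsilon(N)$ works for every $n\in[2,N]$. That $\alpha_n=4^{\,1-n}\to 0$ and that $A_n$ may blow up through the factor $c_2(\alpha_{n-1})$ is harmless here: these quantities appear only in the auxiliary decay bound, which I only need to survive the finitely many steps up to $N$, and they never re-enter the pointwise estimate $\varepsilon^{1+\theta_n}$.
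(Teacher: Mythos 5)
Correct, and essentially the paper's own argument: induct on $n$ through the recursion $S(1,n,P)=P*S(1,n-1,P)$, carrying an auxiliary decay bound alongside the pointwise one so that Lemma~\ref{lemmeS1} can be iterated, with the $1/8$ slack per step absorbing the $n$-independent constant $a_0^{3/4}c_1$. The only difference is cosmetic bookkeeping: the paper re-normalizes the decay constant to $1$ at each step (which is where the $N$-dependence of $\varepsilon$ enters, via $c_2(\alpha_N)\varepsilon^{\alpha_N/8}\le 1$), whereas you carry it as $A_n$ and rightly observe that it never feeds back into the pointwise estimate.
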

\begin{proof}
A direct application of lemma~\ref{lemmeS1} gives for the case $n=2$ ($\lambda=\varepsilon$, $\theta=0$, $\alpha=1$, $a_1=a_2=a_0$):
\begin{align*}
&\forall q\in\Z,\quad |S_q(1,2,P)|=|(P*P)(q)|\le a_0^{3/4}c_{1}\cdot \varepsilon^{5/4},\\
&\forall q\in\Z^*,\quad |S_q(1,2,P)|\le 3\max\{a_0\varepsilon,c_{2}(1)\varepsilon^{1/8}\}\cdot\frac{1}{|q|^{5/4}}.
\end{align*}
If $\varepsilon$ is small enough, both $(a_0^{3/4}c_1\varepsilon^{1/8})$ and $3\max\{a_0\varepsilon,c_{2}(1)\varepsilon^{1/8}\}$ are lower than one, so that the previous relations become:
\begin{align}\label{estimS}
&\forall q\in\Z,\quad |S_q(1,2,P)|=|(P*P)(q)|\le  \varepsilon^{1+1/8},\\\label{estimSSS}
&\forall q\in\Z^*,\quad |S_q(1,2,P)|\le \frac{1}{|q|^{1+1/4}}.
\end{align}
Let us remind that the functions $S(1,i,P)$, $i\ge3$, can be written for all $q\in\Z$
\begin{align}\label{Srecur}
S(1,i,P)=P*S\left(1,i-1,P\right).
\end{align}
Using lemma~\ref{lemmeS1} with this formula and~\eqref{estimS},~\eqref{estimSSS}, this yields the following bounds for $S(1,3,P)$:
\begin{align*}
&\forall q\in\Z,\quad |S_q(1,3,P)|\le c_{1}\cdot \varepsilon^{1+1/4+1/8}= \varepsilon^{1+\theta_3} c_1\varepsilon^{1/8},\\
&\forall q\in\Z^*,\quad|S_q(1,3,P)|\le 3\max\{\varepsilon,c_{2}(\alpha_3)\varepsilon^{\alpha_3/8}\}\cdot\frac{1}{|q|^{1+\alpha_3/4}} ,
\end{align*}
where $\theta_3=\frac{1}{4}$ and $\alpha_3=\frac{1}{4}$. Let $m_a=\max\{1,a_0,a_0^{3/4}c_1,c_1\}$. Let us choose $\varepsilon$ such that 
$$3\max\{c_{2}(\alpha_3)\varepsilon^{\alpha_3/8},m_a\varepsilon^{1/8}\}\le 1:$$
since $c_2$ is a continuous decreasing function from lemma~\ref{lemmeS1}, $3\max\{a_0\varepsilon,c_{2}(1)\varepsilon^{1/8}\}\le 1$ and $a_0^{3/4}c_1\varepsilon^{1/8}\le 1$. Then the previous equations become
\begin{align*}
&\forall q\in\Z,\quad |S_q(1,3,P)|\le \varepsilon^{1+\theta_3},\\
&\forall q\in\Z^*,\quad|S_q(1,3,P)|\le \frac{1}{|q|^{1+\alpha_3/4}}.
\end{align*}
Let the two sequences $\{\theta_n\}$, $\{\alpha_n\}$ be defined by ($n\ge1$):
\begin{align*}
&\theta_1=0,\qquad \theta_n=\frac{1}{8}+\theta_{n-1},\qquad\theta_n=\frac{1}{8}(n-1),\\
&\alpha_1=1,\qquad \alpha_n=\frac{\alpha_{n-1}}{4},\qquad \alpha_n=\frac{1}{4^{n-1}}.
\end{align*}
For $N\ge 2$, if the quantity $c_2(\alpha_N)\varepsilon^{\alpha_N/8}$ is smaller than one, then $c_2(\alpha_n)\varepsilon^{\alpha_n/8}\le 1$ for all $n\le N$. Hence, assuming $3\max\{c_{2}(\alpha_N)\varepsilon^{\alpha_N/8},m_a\varepsilon^{1/8}\}\le 1$ gives by induction that each quantity $S(1,n,P)$, $n\le N$, is bounded by:
\begin{align}\label{estimSn}
&\forall q\in\Z,\quad |S_q(1,n,P)|\le \varepsilon^{1+\theta_n},\\\label{estimSnq}
&\forall q\in\Z^*,\quad|S_q(1,n,P)|\le \frac{1}{|q|^{1+\alpha_n}}.
\end{align}
\end{proof}
One can give now a first estimate on the harmonics $P_q$ of the oscillating solution of~\eqref{eqfreq}:
\begin{Lemma}\label{lemmeosc}
Suppose \Hy1, \Hy3 hold. For $\varepsilon$ small enough, relation~\eqref{eqfreq} implies the following properties:
\begin{itemize}
\item[i)] if hypothesis $1)$ of \Hy2 holds and the operator $J_{\gamma}$ is invertible for all $\gamma\in\Omega_1$, $J_{\gamma}$ is not invertible at $\gamma_0$.
\item[ii)] if \Hy2 holds, then for $e\in\Z^*$ defined in this hypothesis, $P_e\neq0$ and $\varepsilon$ can be chosen to be $\varepsilon=|P_e|$. Moreover, there exists a constant $c_0>0$ such that, for all $q\in\Z$, $q\neq e$, $|P_{q}|\le c_0 \varepsilon^{2}$.
\item[iii)] if \Hy2 holds, $P_{\beta}=0$ for all $\beta\in\Z$ such that $\beta\notin\left\{me\right\}_{m\in\Z}$.
\end{itemize}
\end{Lemma}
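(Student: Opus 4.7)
Because \Hy2 diagonalizes $J_{\gamma}$ in the Fourier basis, equation~\eqref{eqfreq} splits componentwise into $\beta_q(\gamma)P_q = \bigl[\sum_n S(g_n,n,P)\bigr]_q$ for each $q\in\Z$. My plan is to combine these scalar relations with the convolution bounds of lemma~\ref{lemmeSn1} applied at $\varepsilon^{*}=\sup_q|P_q|$, exploiting the fact that $|\beta_q(\gamma)|$ is bounded away from zero except at very few special indices. For~(i), suppose by contradiction that $J_{\gamma_0}$ is still invertible: continuity in $\gamma$ of the eigenvalues then produces a uniform bound $|\beta_q(\gamma)|^{-1}\leq M$ on a neighborhood of $\gamma_0$, and lemma~\ref{lemmeSn1} (with the tail of the nonlinear series controlled via the $\ell^{2}$ convergence of \Hy3) yields $\varepsilon^{*}\leq CM(\varepsilon^{*})^{1+1/8}$, hence $\varepsilon^{*}\geq (CM)^{-8}$ uniformly. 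This contradicts \Hy1, which forces $\varepsilon^{*}\to 0$ as $\gamma\to\gamma_0$.

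For~(ii), \Hy2 part~2) only provides $|\beta_q(\gamma)|^{-1}\leq M$ uniformly for $q\neq\pm e$. Restricting the previous argument to those $q$ forces $\sup_{q\neq\pm e}|P_q|\leq CM(\varepsilon^{*})^{1+1/8}$, so for $\gamma$ close enough to $\gamma_0$ the overall supremum $\varepsilon^{*}$ must be reached at $\pm e$; reality of $p_0$ gives $P_{-e}=\overline{P_e}$, and I set $\varepsilon:=|P_e|=|P_{-e}|$. The sharper bound $|P_q|\leq c_0\varepsilon^{2}$ for $q\neq\pm e$ then requires a bootstrap: plugging the first-pass bound $|P_q|\leq c_1\varepsilon^{1+1/8}$ back into $(S_n)_q$, every nonzero term in the convolution for $q\neq\pm e$ either contains a factor $P_k$ with $k\neq\pm e$ (gaining the improved exponent on that factor) or, exceptionally, consists only of $P_{\pm e}$ factors---which forces $q\in\{0,\pm 2e\}$ and produces an explicit $O(\varepsilon^{2})$ contribution. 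Each iteration upgrades the exponent by $1/8$ and, after finitely many passes, reaches the desired $\varepsilon^{2}$.

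For~(iii), let $\delta=\sup_{q\notin e\Z}|P_q|$. Since $e\Z$ is closed under addition, in any nonzero monomial $P_{q-k_1}P_{k_1-k_2}\cdots P_{k_{n-1}}$ of $(S_n)_q$ with $q\notin e\Z$, at least one factor $P_k$ must have an off-lattice index; bounding that factor by $\delta$ and the remaining $n-1$ by $\varepsilon$, then using the decay and summation estimates of lemmas~\ref{lemmeS1}--\ref{lemmeSn1} together with \Hy3 to sum over $n$, one obtains $|P_q|\leq MK\varepsilon\delta$ uniformly in $q\notin e\Z$. Taking the supremum gives $\delta(1-MK\varepsilon)\leq 0$, which forces $\delta=0$ as soon as $\varepsilon<(MK)^{-1}$.

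The hardest step I anticipate is the $\varepsilon^{2}$ refinement in~(ii): it is genuinely sharper than the direct output of lemma~\ref{lemmeSn1} and demands a bootstrap that simultaneously propagates the pointwise-$\varepsilon$ and pointwise-$1/|q|^{\sigma}$ bounds through each iteration, all while keeping constants uniform in $n$ when summing the nonlinear series via \Hy3. The ``at least one off-peak factor'' combinatorics in both (ii) and (iii) are likewise the kind of step where a finite set of exceptional indices is easy to mishandle and must be tracked with care.
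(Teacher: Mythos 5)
Your proof follows essentially the same bootstrap strategy as the paper—expressing \eqref{eqfreq} componentwise, invoking the boundedness of $(QJ_\gamma Q)^{-1}$, bounding the convolutions via lemma~\ref{lemmeSn1}, and classifying monomials according to how many $P_{\pm e}$ factors they contain—so the substance is the same. The differences are in the packaging and are worth noting: for~(i), you replace the paper's infinite bootstrap to $P_q=0$ with a single-pass lower bound $\varepsilon^*\ge(CM)^{-8}$ contradicting $\varepsilon^*\to0$, which is cleaner, though it relies on asserting that invertibility of $J_{\gamma_0}$ yields a \emph{uniform} bound $|\beta_q(\gamma)|^{-1}\le M$ on a neighborhood—a uniformity in both $q$ and $\gamma$ that is not automatic from pointwise continuity of each eigenvalue and is also left implicit in the paper (the constant $A$ there depends on $\gamma$). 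For~(iii), your observation that $e\Z$ is closed under addition, so every off-lattice $q$ forces an off-lattice factor, giving $\delta\le CM\varepsilon^{1/4}\delta$ (the exponent is $1/4$ rather than $1$ if you trace the constants through lemma~\ref{lemmeS1}, but any positive power suffices) and hence $\delta=0$ in one shot, is a genuinely tidier argument than the paper's unbounded iteration of $\theta$. For~(ii), your parenthetical ``which forces $q\in\{0,\pm2e\}$'' is literally true only for the quadratic term; degree-$n$ all-$P_{\pm e}$ monomials contribute at other multiples of $e$ with weight $O(\varepsilon^n)\le O(\varepsilon^2)$, so the conclusion is unchanged, but the phrasing should acknowledge all $n$.
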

\begin{proof}
Suppose $J_{\gamma}$ has a bounded inverse for all $\gamma\in\Omega$, then the complex-valued function associated to the multiplication operator $\widehat{J_{\gamma}}^{-1}$ is bounded. Here this function is also denoted by the symbol $\widehat{J_{\gamma}}^{-1}$.
Since \Hy3 holds, the functions $\{g_n\}$ are bounded. Let us write $\widetilde{g}_n$ for the supremum of $|g_n|$.  Equation~\eqref{eqfreq} gives for all $q\in\Z$, $M\ge2$:
\begin{align}
|P_q|\le&\sup_k|\widehat{J_{\gamma}}^{-1}(k)|\times\sup_{n\in[2,M]} \widetilde{g}_n\times |\sum_{n=2}^{M} S_q(1,n,P)|+\nonumber\\
&+\sup_k|\widehat{J_{\gamma}}^{-1}(k)|\times|\sum_{n=M}^{\infty} S_q(g_n,n,P)|.
\end{align}
From \Hy3 and lemma~\ref{lemmetf}, the second term of the previous expression tends to zero as $M$ tends to infinity. Let us choose $M$ large enough such that 
$$|\sum_{n=M}^{\infty} S_q(g_n,n,P)|\le \sup_{n\in[2,M]} \widetilde{g}_n\times |\sum_{n=2}^{M} S_q(1,n,P)|,
$$
then there exists a constant $A=2\sup_k|\widehat{J_{\gamma}}^{-1}(k)|\times\sup_{n\in[2,M]} \widetilde{g}_n >0$ such that 
\begin{align}\label{estimJP}
|P_q|\le& A \times|\sum_{n=2}^{M} S_q(1,n,P)|.
\end{align}
Since \Hy1 holds, the hypotheses of lemma~\ref{lemmeSn1} are satisfied and for $\varepsilon$ small enough:
\begin{align}\label{1boundS}
\forall n\in[2,M],q\in\Z, |S_q(1,n,P)|\le \varepsilon^{1+\theta_n},\qquad\theta_n=\frac{1}{8}\left(n-1\right).
\end{align}
Thus for all $q\in\Z$:
\begin{align}
|P_q|\le&A \varepsilon^{1+1/8}\sum_{n=2}^{M} \varepsilon^{\theta_n-1/8}.
\end{align}
From \Hy1, one can find $\varepsilon$ small enough such that $A\varepsilon^{1/16}\sum_{n=2}^{M} \varepsilon^{\theta_n-1/8}\le1$. In consequence:
\begin{align}\label{boundP}
|P_q|\le\varepsilon^{1+\sigma}\qquad \sigma=1/16.
\end{align}
For $i)$, 
using this new bound as hypothesis for lemma~\ref{lemmeSn1} (replacing $\varepsilon$ by $\varepsilon^{1+\sigma}$), gives for all $q\in\Z$ a new, stronger, bound on the $S(1,n,P)$ and thus on $P_q$, i.e.:
\begin{align}\label{boundP2}
|P_q|\le&\varepsilon^{(1+\sigma)^2}.
\end{align}
Since $\sigma>0$, doing this reasoning again, an infinite number of times, will lead to $P_q=0$, for all $q$.

Proof of $ii)$. Here $J_{\gamma}$ is not invertible for all $\gamma\in\Omega$ but $(QJ_{\gamma}Q)^{-1}$ is since \Hy2 holds. Hence, estimates~\eqref{estimJP} and~\eqref{boundP} hold for all $q\neq \pm e$ and $\varepsilon$ small enough ($e$ being the integer defined in \Hy2). Let $s=\{e,-e,q-e,q+e\}$. The function $S(1,2,P)$ at $q\in\Z$ can be written as:
\begin{align}\label{devS}
S_q(1,2,P)=&2P_{q-e}P_{e}+2P_{q+e}P_{-e}+\sum_{\stackrel{i\in\Z}{i\notin s}} P_{q-i}P_i.
\end{align}
The first two terms are bounded by a quantity proportional to $\varepsilon^{2+\sigma}$ in the case where $q\neq\pm2e$ and $q\neq0$ and $\varepsilon^{2}$ otherwise. The sum over $i$ contains only terms bounded by $\varepsilon^{1+\sigma}$ so has a bound proportional to $\varepsilon$ to the power $(1+\sigma)(1+1/8)$, consequence of lemma~\ref{lemmeSn1}.
Similarly, for $q\in\Z$ and $n\in[3,M]$: 
\begin{align}\label{devS2}
S_q(1,n,P)=(P*S(1,n-1,P))(q)=&P_{e}S_{q-e}(1,n-1,P)+P_{-e}S_{q+e}(1,n-1,P)+\nonumber\\
&+P_{q-e}S_{e}(1,n-1,P)+P_{q+e}S_{-e}(1,n-1,P)+\nonumber\\
&+\sum_{\stackrel{i\in\Z}{i\notin s}} P_{q-i}S_i(1,n-1,P).
\end{align}
Similarly, assuming $|S_q(1,n-1,P)|\le\varepsilon^{1+\sigma}$ for all $q\neq\pm e$ and using lemma~\ref{lemmeSn1}, for $\varepsilon$ small enough, yields: 
\begin{align}
|\sum_{\stackrel{i\in\Z}{i\notin s}} P_{q-i}S_i(1,n-1,P)|\le \varepsilon^{(1+\sigma)(1+\theta_n)},
\end{align}
hence
\begin{align}\label{boundSall}
|S_q(1,n,P)|\le b \varepsilon^{(1+\sigma)(1+\theta_n)},\qquad b>0.
\end{align}
Since the quantity $QJQ$ is invertible, for all $q\neq \pm e$ the estimate~\eqref{estimJP} holds true. Let $\lambda=\varepsilon^{1+\sigma}$. Processing like in $i)$:
\begin{align}
|P_q|\le&A b \lambda^{1+1/8}\sum_{n=2}^{M} \lambda^{\theta_n-1/8},
\end{align}
and for $\varepsilon$ small enough such that $|A b \lambda^{1/16}\sum_{n=2}^{M} \lambda^{\theta_n-1/8}|\le 1$
\begin{align}\label{boundPq}
|P_q|\le&\lambda^{1+\sigma} \le \varepsilon^{(1+\sigma)^2}.
\end{align}
This procedure can be repeated several times and gives each time a larger power of $\varepsilon$, until the first two terms of~\eqref{devS} and four terms of~\eqref{devS2} become the largest terms. This implies that for all $q\in\Z$, $q\neq \pm e$, $|P_q|\le|P_e|$. If $P_e=0$, then no oscillation exists. If $P_e\neq0$, the largest term $P_e$ can be chosen to be $|P_e|=\varepsilon$ and there exists $c_0>0$ such that for all $q\in\Z$, $q\neq \pm e$
\begin{align}
|P_q|\le& c_0 \varepsilon^{2}.
\end{align}
For $q\neq\pm 2e$ the bound is even smaller as it can be seen from the leading terms of~\eqref{devS} and~\eqref{devS2}.

For $iii)$,
let us define the set $s_e=\left\{me\right\}_{m\in\Z^*}$. Let $\beta\in\Z$. If $\beta\notin s_e$, then for each $n\in\Z$ fixed, at least one of the quantities $(\beta-n)$ or $n$ does not belong to $s_e$. For all $\beta\notin s_e$, $ii)$ gives
\begin{align}\label{step1P}
|P_\beta|\le\varepsilon^{2}.
\end{align}
The first two terms of~\eqref{devS} are bounded by $\varepsilon^{3}$ and from lemma~\ref{lemmeS1}, one can write:
$$|\sum_{\stackrel{i\in\Z}{i\notin s}}P_{\beta-i}P_i|\le \varepsilon^{\theta}, \quad \theta=2+\frac{1}{4}.
$$
Following the procedure used in the proof of $ii)$, one will find that $\theta$ is increasing at each step. Eventually, $\theta$ tends to infinity and implies for $\varepsilon$ small enough:
$$P_{\beta}=0.
$$
\end{proof}
\begin{proof}[Proof of Theorem~\ref{Thbif}]
The proof is a direct application of the results given in lemma~\ref{lemmeosc}. Notice first that in order to have a solution satisfying \Hy1, the point $i)$ of this lemma requires that $J_{\gamma}$ possesses at least one zero eigenvalue at $\gamma_0\in\Omega$, showing the necessity of a hypothesis such as \Hy2. As a consequence, the bifurcation point is the point for which an eigenvalue of $J_{\gamma}$ reaches zero. The point $ii)$ of the same lemma states that the Fourier series of the oscillating solution must include a non zero value for $P_e$, where $e$ is given in \Hy2. The point $iii)$ implies that $P_e$ is the term of the Fourier series associated to the frequency of oscillation, since only harmonics associated with frequencies multiple of $e$ can be different from zero. Eventually, the bound on the Fourier components given in $ii)$ of lemma~\ref{lemmeosc} finishes the proof.
\end{proof}

\section{Decrease of the harmonics}\label{decrease}
This section concerns the estimation of the Fourier components amplitude of the solution. It is dedicated to theorem~\ref{Th1} and its proof.

Before stating the result, notice that the index $e$ of \Hy2 can be chosen to be $e=1$ without loss of generality. Sections $ii)$ and $iii)$ of lemma~\ref{lemmeosc} assert that only $P_q$ with $|q|\in\{e,2e,3e,...\}$ are not null, so that $e$ determines the oscillating frequency: $\omega_e$.
\begin{Theorem}\label{Th1}
Under the hypothesis of Theorem~\ref{Thbif}, for all $\gamma\in\Omega$ where $|P_1|$ is small enough, there exists $k=k(|P_1|)\ge2$ such that
the oscillating solution of the system~\eqref{eqfreq} possesses the following properties:
for all $q\in\Z^*$,
\begin{align}\label{estimPk}
&\ |P_{q}|\le c_{q}|P_1|^{|q|},\quad \text{for all } |q|\le k,\nonumber\\
&\ |P_{q}|\le c_{k}|P_1|^k,\quad \text{for all } |q|\ge k,
\end{align}
where $c_q,c_k$ are positive constants.
Moreover, the rank $k$ increases when $|P_1|$ decreases and tends to infinity when $|P_1|$ tends to zero.
\end{Theorem}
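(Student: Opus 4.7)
The plan is to establish~\eqref{estimPk} by induction on a rank $m$, at each step improving the bound on $|P_q|$ for $|q| = m+1$ from the previous uniform estimate $|P_q| \le c_m |P_1|^m$ to the sharper $|P_q| \le c_{m+1} |P_1|^{m+1}$. The base case $m = 2$ is provided by Theorem~\ref{Thbif}, which gives $|P_{\pm 1}| = |P_1|$ together with $|P_q| \le c_0 |P_1|^2$ for every $q \neq \pm 1$; this matches~\eqref{estimPk} with $k = 2$.

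For the inductive step, fix $q \in \Z$ with $|q| = m+1$. Since $|q| \neq 1$, hypothesis \Hy2 makes $\widehat{J_\gamma}(q)$ invertible, and~\eqref{eqfreq} yields
$$P_q = \widehat{J_\gamma}(q)^{-1} \sum_{n=2}^{\infty} S_q(g_n, n, P).$$
The goal is to bound $|S_q(g_n, n, P)| \le C_n |P_1|^{m+1}$ with $\sum_n C_n < \infty$. The central combinatorial observation is that every summand of $S_q(g_n, n, P)$ has the form $g_n(\cdot)\, P_{i_1} P_{i_2} \cdots P_{i_n}$ with $i_1 + \cdots + i_n = q$, so $|i_1| + \cdots + |i_n| \ge m+1$. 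Setting $a(i) = \min(|i|, m)$ for $i \neq 0$ and $a(0) = 2$ (since $|P_0|$ already carries an extra $|P_1|^2$ from Theorem~\ref{Thbif}), the inductive hypothesis yields $|P_{i_j}| \le C |P_1|^{a(i_j)}$. A case analysis on whether the tuple contains some $|i_l| > m$ and whether it contains zero entries shows that $\sum_j a(i_j) \ge m+1$ in every configuration (using $n \ge 2$). Hence each individual product is bounded by $C^n |P_1|^{m+1}$.

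Passing from pointwise bounds on products to a bound on the full convolution requires controlling the infinite sums indexed by $(k_1, \dots, k_{n-1}) \in \Z^{n-1}$. This is precisely where the $C^2$ regularity in \Hy1 enters: it gives $|P_q| \le a_0/|q|^2$, which places $P \in \ell^1$, and Young's inequality then provides the necessary $\ell^1$-bounds on iterated convolutions, in the spirit of lemmas~\ref{lemmeS1} and~\ref{lemmeSn1}. The series in $n$ is controlled as in the proof of lemma~\ref{lemmeosc}: bound~\eqref{boundS} produces a geometric factor $|P_1|^{(n-1)/8}$, so splitting at some large $N$ leaves a head of finitely many terms and an arbitrarily small tail. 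Summands containing $P_q$ itself (for instance $P_q P_0$ inside $S_q(1,2,P)$) are handled by the inductive bound $|P_q| \le c_m |P_1|^m$; multiplied by $|P_0| \le c_0 |P_1|^2$, they contribute at order $|P_1|^{m+2}$, strictly higher than the target $|P_1|^{m+1}$, so no absorption trick is needed.

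The last point is the finiteness and growth of $k$. At each inductive step the constant $c_{m+1}$ depends on $c_m$, on $\sup_{|q| \le m+1}|\widehat{J_\gamma}(q)^{-1}|$, and on the coefficients of $g_n$, so generically it grows with $m$. The iteration proceeds as long as the smallness conditions on $|P_1|$ accumulated across steps (analogous to those appearing in the statement of lemma~\ref{lemmeSn1}) remain compatible; thus, for a given $|P_1|$, there is a largest rank $k = k(|P_1|)$ reachable, beyond which only the uniform estimate $|P_q| \le c_k |P_1|^k$ remains. As $|P_1| \to 0$, more of these smallness conditions can be satisfied simultaneously, hence $k(|P_1|) \to \infty$. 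The principal difficulty is the combinatorial bookkeeping in the decomposition of $S_q(g_n, n, P)$ together with the careful tracking of how constants propagate through the induction, so that $c_{m+1}|P_1|^{m+1}$ genuinely improves upon $c_m |P_1|^m$ rather than merely replacing it.
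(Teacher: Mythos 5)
Your overall architecture matches the paper's proof: induction on a rank $m$, decomposing $S_q(g_n,n,P)$ into a finite \emph{head} of near-resonant convolution indices and an infinite \emph{tail}, combinatorial bookkeeping of the exponents, and tracking of how the constants propagate. The combinatorial observation that $\sum_j |i_j|\geq |q|$ when $\sum_j i_j=q$ is correct, and is indeed what implicitly drives the choice of the index sets $s_q$ and $t_r$ in the paper.

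The gap lies in the tail estimate. You argue that every individual product $g_n(\cdot)\,P_{i_1}\cdots P_{i_n}$ is bounded pointwise by $C^n|P_1|^{m+1}$, and then assert that ``Young's inequality\ldots provides the necessary $\ell^1$-bounds on iterated convolutions.'' These two facts do not combine to give the claimed bound. Summing the infinitely many tail terms requires trading the power of $|P_1|$ against the $1/|i|^2$ decay coming from \Hy1, and the interpolation performed in lemma~\ref{lemmeS1} (bounding $|f*g|$ by $\sup|f|^{1/4}|g|\cdot\sum|f|^{3/4}$) costs a fractional power: it yields only a gain of roughly $|P_1|^{m/4}$ over the inductive hypothesis. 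In particular, for the tail where both $|i|$ and $|q-i|$ exceed $m$, a single application of lemma~\ref{lemmeS1} gives an exponent near $m(1+1/4)$, which is $\geq m+1$ only for $m\geq 4$ and certainly fails at the very first step $m=2\to m=3$. This is precisely why the paper's proof contains an inner \emph{bootstrap}: once the estimate $|P_q|\leq D(r+1)\,\varepsilon^{r+1/16}$ is obtained for $|q|>r$, it is fed back into the inductive hypothesis to produce $\varepsilon^{r+2/16}$, and the process is repeated until the integer exponent $\varepsilon^{r+1}$ is reached. This iteration inside each rank is the technical heart of the argument and is absent from your proof; without it, the passage from ``each product is of order $|P_1|^{m+1}$'' to ``the full sum is of order $|P_1|^{m+1}$'' is unjustified.

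A secondary omission is that you do not specify the recursion satisfied by the constants $c_q$ (the paper defines $D(q)=2A\sum_{i\in s_q}D(q-i)D(i)$). This recursion is not cosmetic: it is what makes the smallness condition on $\varepsilon=|P_1|$ depend on the rank $r$, which in turn is what produces a finite maximal rank $k(|P_1|)$ and its divergence as $|P_1|\to 0$. Your last paragraph states the right conclusion, but the mechanism by which the smallness thresholds ``accumulate'' needs to be tied explicitly to the growth of $D(q)$ and to the bootstrap described above.
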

\begin{Remark}
If some of the transfer functions $\{\widehat{Z_{i,j}}\}$ change sign or decrease fast enough when their variable tends to infinity, the decrease of the harmonics amplitude with frequency may be faster than the estimation given in the above theorem.
\end{Remark}
\begin{proof}[Proof of theorem~\ref{Th1}]
The reasoning of the proof is similar to the one used in the proof of lemma~\ref{lemmeosc}. 
The operator $Q\widehat{J_{\gamma}}Q$ has a bounded inverse for all $\gamma\in\Omega$ since \Hy2 holds and the functions $\{g_n\}$ are bounded since \Hy3 holds. Let us write $\widetilde{g}_n$ for the supremum of $|g_n|$.  Equation~\eqref{eqfreq} gives for all $q\in\Z\backslash\{1,-1\}$, $M\ge2$:
\begin{align}
|P_q|\le&\sup_k|(Q\widehat{J_{\gamma}}Q)^{-1}(k)|\times\sup_{n\in[2,M]} \widetilde{g}_n\times |\sum_{n=2}^{M} S_q(1,n,P)|+\nonumber\\
&+\sup_k|(Q\widehat{J_{\gamma}}Q)^{-1}(k)|\times|\sum_{n=M}^{\infty} S_q(g_n,n,P)|.
\end{align}
From \Hy3 and lemma~\ref{lemmetf}, the second term of the previous expression tends to zero as $M$ tends to infinity. Let us choose $M$ large enough such that 
$$|\sum_{n=M}^{\infty} S_q(g_n,n,P)|\le \sup_{n\in[2,M]} \widetilde{g}_n\times |\sum_{n=2}^{M} S_q(1,n,P)|,
$$
then there exists a constant $A>0$ such that 
\begin{align}\label{estimJP2}
|P_q|\le& A \times|\sum_{n=2}^{M} S_q(1,n,P)|\qquad\forall q\neq\pm1.
\end{align}
Let $s$ be the set $s=\left\{0,1,-1,2,-2,q-1,q+1,q-2,q+2\right\}$.
The function $S(1,2,P)$ at $q\in\Z$ can be written as:
\begin{align}\label{devSp}
S_q(1,2,P)=&2P_{q-1}P_{1}+2P_{q+1}P_{-1}+2P_{q-2}P_{2}+2P_{q+2}P_{-2}+\sum_{\stackrel{i\in\Z}{i\notin s}} P_{q-i}P_i.
\end{align}
From Th.~\ref{Thbif}, for all $|q|\ge3$ the first four terms of~\eqref{devSp} are bounded by a quantity proportional to $\varepsilon^{3}$. In addition, each term of the sum over $i$ contains two quantities, each one is bounded by $c_0\varepsilon^{2}$ for some $c_0>0$, then lemma~\ref{lemmeS1} (with $\lambda=(\sqrt{c_0}\varepsilon)^2$) gives the estimate:
\begin{align}
\sum_{\stackrel{n\in\Z}{n\notin s}} |P_{q-n}P_n|\le (\sqrt{c_0}\varepsilon)^{2(1+1/4)}=c_0^{2+1/2}\varepsilon^{1/4}\varepsilon^{2+1/4}.
\end{align}
As a consequence, for $|q|\ge3$, and for $\varepsilon$ small enough, $|S_q(1,2,P)|\le \varepsilon^{2+\theta}$ where  $\theta=1/4$. Let us note for the following that in the case where $q=0$, $q=\pm1$ and $q=\pm2$, some of the first terms of~\eqref{devSp} are bounded by $\varepsilon^2$, so for $q\in\{0,\pm1,\pm2\}$, there exists $c_{2,2}>0$ such that $|S_q(1,2,P)|\le c_{2,2}\varepsilon^{2}$. Let us now give a bound on $S(1,n,P)$ by induction. Assume that at rank $n-1\ge2$ the following statements are true:
\begin{itemize}
 \item[a.1)] for $q\in\{0,\pm1,\pm2\}$, $|S_q(1,n-1,P)|\le C_{2}\varepsilon^{2+\theta_{n-1}}$,
\item[b.1)] for $q$, $|q|\ge3$, $|S_q(1,n-1,P)|\le \varepsilon^{2+\theta_n}$,
\end{itemize}
for some $C_{2}>0$ and $\theta_n=1/4(n-1)$. Then at rank $n$, for $q\in\Z$ : 
\begin{align}\label{devSS}
S_q(1,n,P)=(P*S(1,n-1,P))(q)=&U_q^{n}+\sum_{\stackrel{i\in\Z}{i\notin s}} P_{q-i}S_i(1,n-1,P),
\end{align}
where, for $s_2=\{0,\pm1,\pm2\}$,
$$U_q^{n}=\sum_{k\in s_2}P_{k}S_{q-k}(1,n-1,P)+P_{q-k}S_{k}(1,n-1,P).$$
Firstly, for $|q|\le 2$
\begin{align*}
|U_q^{n}|&\le5 c_0C_2\varepsilon^{2+\theta_{n-1}+1}=5c_0C_2\varepsilon^{3/4}\cdot\varepsilon^{2+\theta_{n-1}+1/4}\\
&\le\varepsilon^{2+\theta_{n}},
\end{align*}
for $\varepsilon$ such that $5c_0C_2\varepsilon^{3/4}\le1$.
For $|q|\ge 3$,
$$|U_q^{n}|\le\varepsilon^{2+\theta_{n}+1/4}.
$$
Secondly, for all $q\in\Z$ the sum over $i$ of~\eqref{devSS} is bounded, from lemma~\ref{lemmeS1} and for $\varepsilon$ small enough, by:
\begin{align*}
\sum_{\stackrel{i\in\Z}{i\notin s}}\left| P_{q-i}S_i(1,n-1,P)\right|\le\lambda^{1+1/4+\theta_{n-1}/2} a_1^{3/4}C_1c_0\le \lambda^{1+1/8+\theta_{n-1}/2},\ \text{ where } \lambda=\varepsilon^{2}.
\end{align*}
This proves relations $a)$ and $b)$ by induction.
In consequence~\eqref{estimJP2} implies that there exists a constant $D>0$, such that for all $|q|\ge3$ (see proof of lemma~\ref{lemmeosc} for details):
\begin{align}\label{boundPq3}
|P_q|\le& \varepsilon^{2+\theta},\qquad \sigma=1/8.
\end{align}
Similarly to the proof of $ii)$ of lemma~\ref{lemmeosc}, this procedure can be repeated several times and gives each time a larger power of $\varepsilon$, until the first terms of~\eqref{devSp} become the largest terms. In conclusion, there exist $D>0$ such that one has:
\begin{align}
\text{for } q\ge3,\quad|P_q|\le& D \varepsilon^{3},\qquad D=9.
\end{align}

For the general case the same reasoning is used but the constant $D$ depends on $q$ and increases with it. Suppose there exists a rank $r\ge3$ and a function $D:\Z\to\R_+$ such that :
\begin{itemize}
 \item[a.2)] for all $q$, $|q|\le r$, $|P_q|\le D(q)\varepsilon^{q}$,
\item[b.2)] for all $q$, $|q|\ge r$, $|P_q|\le D(r)\varepsilon^{r}$.
\end{itemize}
Let us show it is true at rank $r+1$ under appropriate conditions. The steps are the same as for the case $r=3$ treated previously. In the following, only the case $q$ positive is treated as the case $q$ negative will lead to the same estimates.
For $q\in[2,r]$, let us introduce the set $s_q=\Z\cap[1,q-1]$. Let $D$ be the function satisfying:
\begin{equation}
\forall q\ge2,\quad D(q)=2A\sum_{i\in s_q}D(q-i)D(i).
\end{equation}
Notice that the function $D$ is monotone increasing on $\N$.
For the estimation of $P_{r+1}$, let us introduce for $n\ge 3$
$$U_{r+1}^2=\sum_{i\in s_{r+1}}P_{r+1-i}P_i,\quad U_{r+1}^n=\sum_{i\in s_{r+1}}P_{r+1-i}S_i(1,n-1,P).
$$
Instead of~\eqref{devSp}, one has to estimate the quantity:
\begin{equation}\label{Usumi}
S_{r+1}(1,2,P)=U_{r+1}^2+\sum_{i\notin s_{r+1}}P_{r+1-i}P_i.
\end{equation}
The quantity $U_{r+1}^2$ is bounded by:
$$|U_{r+1}^2|\le \varepsilon^{r}\sum_{i\in s_{r+1}}D(r+1-i)D(i)=\frac{D(r+1)}{2A}\varepsilon^{r}.
$$
Let us choose $\varepsilon$ such that $\varepsilon^{1/16}D(r)^2\le D(r)/(2MA)$. Let us notice that this relationship implies a dependence between $\varepsilon$ and $r$.
The sum over $i$ in~\eqref{Usumi} is bounded by $D(r)^2\varepsilon^{r+1/8}$ from lemma~\ref{lemmeSn1}. Hence, for $\varepsilon$ small enough:
\begin{equation}\label{qinfr}
|S_{r+1}(1,2,P)|\le \frac{D(r)}{2MA}\varepsilon^{r+1/16}.
\end{equation}
For $q> r+1$, let us introduce the set $t_r=\Z\cap [-r,r]$ and for $n\ge 3$
$$V_q^2=\sum_{i\in t_{r+1}}P_{q-i}P_i,\quad V_q^n=\sum_{i\in t_{r+1}}P_{q-i}S_i(1,n-1,P).
$$
Instead of~\eqref{devSp}, one has to estimate the quantity:
\begin{equation}\label{SVsum}
S_q(1,2,P)=V_q^2+\sum_{i\notin t_{r+1}}P_{q-i}P_i.
\end{equation}
The quantity $V_q^2$ contains terms bounded by $D(r)^2\varepsilon^{r+1}$ or smaller bounds. So for $\varepsilon$ such that $\varepsilon^{\frac{15}{16}}D(r)^2 \le D(r)/(4MAr)$ one obtains:
$$|V_q^2|\le 2rD(r)^2\varepsilon^{r+1}\le \frac{D(r)}{2MA}\varepsilon^{r+1/16}.
$$
For the sum over $i$ of~\eqref{SVsum}, each term contains two quantities where at least one is bounded by $D(r)\varepsilon^r$. Processing like in the proof of lemma~\ref{lemmeSn1}:
\begin{equation*}
|\sum_{i\notin t_q}P_{q-i}P_i|\le D(r)^2\varepsilon^{r+1/8}\le \frac{D(r)}{2MA}\varepsilon^{r+1/16},
\end{equation*}
for $\varepsilon$ such that $\varepsilon^{1/16}D(r)^2\le D(r)/(2MA)$.

Next, one has to treat the case of multiple convolutions. Similarly to the case $r=3$, suppose at rank $n-1\in[2,M]$ there exist a function $h_{n-1}:\Z\to\R_+$ such that:
\begin{itemize}
 \item[a.3)] for $q\in [-r,r]$, $|S_q(1,n-1,P)|\le h_{n-1}(q)\varepsilon^{q}$,
\item[b.3)] for $q$, $|q|> r$, $|S_q(1,n-1,P)|\le h_{n-1}(r)\varepsilon^{r+1/16}$,
\end{itemize}
and where $h$ obeys
$$h_{n}(q)=2MA\varepsilon\sum_{i\in s_q}f(q-i)h_{n-1}(i).$$
At rank $n$, for $|q|\le r$ : 
\begin{align}\label{devSSg}
S_{q}(1,n,P)=(P*S(1,n-1,P))(q)=&U_{q}^{n}+\sum_{\stackrel{i\in\Z}{i\notin s_{q}}} P_{q-i}S_i(1,n-1,P),
\end{align}
and for $|q|> r+1$ :
\begin{align}\label{devSSg2}
S_q(1,n,P)=&V_q^{n}+\sum_{\stackrel{i\in\Z}{i\notin t_{r+1}}} P_{q-i}S_i(1,n-1,P).
\end{align}
It is similar to the previous case, where $n=2$. Firstly, the quantity $U_{q}^{n}$ is bounded by
$$|U_{q}^{n}|\le \frac{h_{n}(q)}{2MA}\varepsilon^{q}.
$$
Secondly, the term $V_q^{n}$ is bounded by $h_{n-1}(r)^2 r\varepsilon^{r+1/16+1}$ and the sum over $i$ of~\eqref{devSSg} (resp.~\eqref{devSSg2}) are bounded by $h_{n-1}(r)^2\varepsilon^{q+1/8}$ (resp. $h_{n-1}(r)^2\varepsilon^{r+1/8}$).
Let us assume that $\varepsilon$ is such that for all $n\in[3,M+1]$:
$$h_{n-1}(r)^2 2 r \varepsilon^{15/16}\le \frac{h_n(r)}{2AM}\ \text{ and }\ h_{n-1}(r)^2\varepsilon^{1/16}\le \frac{h_n(r)}{2AM}.
$$
In consequence, $a.3)$ and $b.3)$ are true at rank $n$ and this finishes the estimates on the multiple convolutions. 

The last step of the proof is to use~\eqref{estimJP2} which leads to:
$$\forall q\ge r+1,\ |P_q|\le D(r+1)\varepsilon^{r+1/16}.
$$
Replacing $b.2)$ with this estimate and doing each step of the proof again will gives
$$\forall q\ge r+1,\ |P_q|\le D(r+1)\varepsilon^{r+2/16},
$$
and doing it many times will lead to the bound $D(r+1)\varepsilon^{r+1}$.
To obtain these estimates, $\varepsilon$ must be bounded by a quantity which depends on $r$, and which decreases when $r$ increases. Hence, for $\varepsilon$ fixed, there is a $r$ maximal. Eventually, Th.~\ref{Thbif} gives $|P_1|=\varepsilon$ and this concludes the proof.
\end{proof}

\section{Bifurcation point and stability}\label{stable}

In this section, the regime of the system around the bifurcation point is investigated. The direction of the bifurcation, the amplitude of the first Fourier component and the influence of $\gamma$ on the oscillation frequency are given. The results are similar to the ones arising in the finite dimensional Hopf bifurcation. As for the Hopf theorem, an additional hypothesis on the system is required, i.e. the transversality condition stated in \Hy4.

Without loss of generality, as explained in the previous part, the index $e$ of \Hy2 is assumed to be $e=1$ in the following. 
Suppose \Hy2 holds, then for $\gamma\in\Omega$, $\beta_2(\gamma)$, the eigenvalue of $J_{\gamma}$ associated to $\omega_2$, is not null. Assume \Hy3 and let us introduce the notation:
\begin{equation}\label{defD}
D_1(\gamma)=[\beta_2(\gamma)]^{-1}g_{2}(2,1)\left(g_{2}(1,2)+g_{2}(1,-1)\right)-\left(g_{3}(1,0,1)+g_{3}(1,0,-1)+g_{3}(1,2,1)\right).
\end{equation}
Let us denote by $\Re[z]$ the real part and $\Im[z]$ the imaginary part of the complex number $z$. In this section, the following additional hypothesis is assumed:
\begin{itemize}
\item[\Hy4] There exists an open set $\Gamma$ with $\gamma_0\in\Gamma$ such that the complex-valued functions $D_1$ and $\beta_1$ are respectively continuously differentiable and twice continuously differentiable with respect to $\gamma$ on $\Gamma$. 
Moreover, $D_1(\gamma_0)\neq0$ and
$$\Re[ \frac{1}{D_1(\gamma_0)}\frac{d\beta_1}{d\gamma}(\gamma_0)]\neq0.
$$
\end{itemize}
\begin{Theorem}\label{ThP1}
Suppose \Hy1, \Hy2, \Hy3, \Hy4 hold. Then, 
\begin{itemize}
\item[i)] The quantity 
\begin{align}\label{defalpha}
\alpha=\frac{1}{D_1(\gamma_0)}\frac{d\beta_1}{d\gamma}(\gamma_0),
\end{align}
is a real number, different from zero. If its sign is positive (resp. negative) the bifurcation is direct (resp. inverse), i.e. the oscillating solution described in \Hy1 exists for $\gamma>\gamma_0$ (resp. $\gamma<\gamma_0$).
\item[ii)] for all $\gamma\in\Gamma$ such that $|\gamma-\gamma_0|$ is small enough, the modulus of the first Fourier component of the oscillating solution is given by:
\begin{align}
|P_1|=\sqrt{|\alpha|}\sqrt{|\gamma-\gamma_0|}+{\cal O}(|\gamma-\gamma_0|^{3/2}).
\end{align}
\end{itemize}
\end{Theorem}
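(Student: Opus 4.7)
The plan is to derive a scalar Stuart--Landau-type amplitude equation for $P_1$ from the harmonic balance equation~\eqref{eqfreq}, by projecting onto the mode $q=1$ and eliminating the second harmonic $P_2$ through the $q=2$ projection. Since $\widehat{J_\gamma}$ is diagonal in the Fourier basis by~\Hy2, the projection onto the fundamental mode reads $\beta_1(\gamma)\,P_1 = \sum_{n\ge 2} S_1(g_n,n,P)$. Using Theorem~\ref{Th1} together with $|P_0|\le c_0|P_1|^2$ from Theorem~\ref{Thbif}, each nonzero harmonic satisfies $|P_q|\le c_q|P_1|^{\mu(q)}$ with $\mu(q)=|q|$ for $q\neq 0$ and $\mu(0)=2$, so a product $P_{q-k_1}P_{k_1-k_2}\cdots P_{k_{n-1}}$ with indices summing to $1$ has order $|P_1|^{\sum\mu(k_i)}$. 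A parity observation plays a key role: since $\mu(k)\equiv k\pmod 2$ and $\sum k_i=1$, the exponent $\sum\mu(k_i)$ is always odd, so only odd powers $|P_1|^3,\,|P_1|^5,\ldots$ can contribute on the right-hand side. The $|P_1|^3$ terms arise precisely from $n=2$ with $k_1\in\{-1,2\}$ (giving $(g_2(1,2)+g_2(1,-1))P_{-1}P_2$) and from $n=3$ with $(k_1,k_2)\in\{(0,1),(0,-1),(2,1)\}$ (giving the three cubic terms proportional to $|P_1|^2 P_1$); all remaining contributions are $O(|P_1|^5)$, uniformly controlled via Lemma~\ref{lemmeSn1} and the convergence of $\sum R_n$ from~\Hy3.

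Applying the same order analysis to the $q=2$ projection yields $\beta_2(\gamma)P_2 = g_2(2,1)P_1^2 + O(|P_1|^4)$, so by~\Hy2 (boundedness of $\beta_2^{-1}$ near $\gamma_0$), $P_2 = [\beta_2(\gamma)]^{-1}g_2(2,1)P_1^2 + O(|P_1|^4)$. Substituting back into the $q=1$ equation and collecting gives the reduced amplitude equation
\begin{equation*}
\beta_1(\gamma)\,P_1 \;=\; D_1(\gamma)\,|P_1|^2 P_1 \;+\; O(|P_1|^5),
\end{equation*}
with $D_1$ as in~\eqref{defD}. Dividing by $P_1\neq 0$ (Theorem~\ref{Thbif}) and Taylor expanding $\beta_1$ and $D_1$ around $\gamma_0$ using~\Hy4 (with $\beta_1(\gamma_0)=0$ and $D_1(\gamma_0)\neq 0$) turns this into
\begin{equation*}
\alpha\,(\gamma-\gamma_0) \;+\; O\!\bigl((\gamma-\gamma_0)^2\bigr) \;=\; |P_1|^2 \;+\; O(|P_1|^4),
\end{equation*}
where $\alpha=\beta_1'(\gamma_0)/D_1(\gamma_0)$. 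Splitting into real and imaginary parts: the real equation $\Re[\alpha](\gamma-\gamma_0) = |P_1|^2 + O\!\bigl(|\gamma-\gamma_0|^{2}\bigr)$, combined with $\Re[\alpha]\neq 0$ from~\Hy4 and the non-negativity of $|P_1|^2$, forces $\Re[\alpha](\gamma-\gamma_0)\ge 0$ on $\Omega_1$ and gives $|P_1|^2\sim|\Re[\alpha]|\,|\gamma-\gamma_0|$; injecting this into the imaginary equation yields $\Im[\alpha](\gamma-\gamma_0)=O(|\gamma-\gamma_0|^{2})$, so $\Im[\alpha]=0$. Hence $\alpha\in\R$ with $\alpha\neq 0$, its sign distinguishes direct from inverse bifurcation (proving~(i)), and a square-root extraction of $|P_1|^2 = |\alpha|\,|\gamma-\gamma_0|+O(|\gamma-\gamma_0|^2)$ yields~(ii).

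The main obstacle is the rigorous control of the infinite series tail and the multiple convolutions, to justify that the remainder in both the $q=1$ and $q=2$ equations is truly $O(|P_1|^5)$ and $O(|P_1|^4)$ respectively, uniformly in $n$; this hinges on the repeated application of Lemma~\ref{lemmeSn1} combined with the parity observation above, in the spirit of the inductive bookkeeping already used in the proof of Theorem~\ref{Th1}. A secondary conceptual delicacy is the a posteriori conclusion that $\alpha$ is real: this does not follow from any explicit cancellation in the coefficients but rather from the existence hypothesis~\Hy1, and implicitly reflects the dependence of $\beta_1(\gamma)$ on the period $\tau(\gamma)$ through the Fourier basis, which the frequency-domain formulation absorbs silently into the single complex function $\beta_1(\gamma)$.
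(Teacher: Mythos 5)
Your proposal takes essentially the same route as the paper's proof: project~\eqref{eqfreq} onto the modes $q=1$ and $q=2$, use Theorems~\ref{Thbif} and~\ref{Th1} to identify the leading terms, eliminate $P_2$ via $\beta_2(\gamma)^{-1}$, substitute back to obtain the Stuart--Landau relation $\beta_1(\gamma)P_1=D_1(\gamma)|P_1|^2P_1+{\cal O}(\varepsilon^4)$, divide by $P_1\neq0$, Taylor-expand $\beta_1,D_1$ around $\gamma_0$ under~\Hy4, and finally use positivity of $|P_1|^2$ to force $\Im[\alpha]=0$ and read off the direction of the bifurcation from the sign of $\Re[\alpha]$. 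The parity observation you add (that $\sum\mu(k_i)$ is odd, so higher-order contributions to the $q=1$ balance are ${\cal O}(|P_1|^5)$ rather than ${\cal O}(\varepsilon^4)$) is a sharper bookkeeping of the remainder, but it does not change the structure of the argument and the paper's coarser ${\cal O}(\varepsilon^4)$ bound already suffices for both~(i) and~(ii); note also that your $\mu(0)=2$ accounting shows the terms $g_2(1,0)P_1P_0$ and $g_2(1,1)P_0P_1$ are a priori the same order $\varepsilon^3$ as the retained ones, which you silently drop exactly as the paper does (implicitly assuming $P_0=0$, or that $P_0$ is eliminated in the same way as $P_2$ and absorbed into $D_1$), so this is a shared tacit step rather than a gap specific to your write-up.
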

\begin{proof}
For $\varepsilon$ small enough, from Th.~\ref{Th1} and lemma~\ref{lemmeS1}, one can write:
\begin{align*}
&S(g_2,2,1)=\left(g_{2}(1,2)+g_{2}(1,-1)\right)P_{-1}P_2+{\cal O}(\varepsilon^4),\\ &S(g_3,3,1)=\left(g_{3}(1,0,1)+g_{3}(1,0,-1)+g_{3}(1,2,1)\right)|P_{1}|^2P_1+{\cal O}(\varepsilon^4),\\
&\forall n\ge4,\quad |S(g_n,n,1)|\le c_1\varepsilon^4,\quad\text{with } c_1>0.
\end{align*}
One has also:
\begin{align*}
&S(g_2,2,2)=g_{2}(2,1)P_{-1}P_1+{\cal O}(\varepsilon^4),\\ 
&\forall n\ge3,\quad |S(g_n,n,2)|\le c_2\varepsilon^4,\\
&\forall n\ge2,q\ge3,\quad |S(g_n,n,q)|\le c_3\varepsilon^4.
\end{align*}
Then it yields for $P_2$:
\begin{align*}
J_{\gamma}P_2&=\beta_2(\gamma)P_2=\sum_{i=2}^{\infty}S(g_i,i,2)=g_{2}(2,1)P_{-1}P_1+{\cal O}(\varepsilon^4)\\
P_2&=[\beta_2(\gamma)]^{-1}g_{2}(2,1)P_{-1}P_1+{\cal O}(\varepsilon^4),
\end{align*}
For $P_1$:
\begin{align*}
J_{\gamma}P_1&=\sum_{i=2}^{\infty}S(g_i,i,1)\\
&=\left(g_{2}(1,2)+g_{2}(1,-1)\right)P_{-1}P_2+\left(g_{3}(1,0,1)+g_{3}(1,0,-1)+g_{3}(1,2,1)\right)|P_{1}|^2P_1+{\cal O}(\varepsilon^4),
\end{align*}
which yields
\begin{align*}
&P_1\left\{\beta_1(\gamma)-D_1(\gamma)|P_{1}|^2\right\}+{\cal O}(\varepsilon^4)=0,
\end{align*}
where $D_1$ is the differentiable function defined in~\eqref{defD}. For the oscillating solution, where $P_1\neq0$, one can write
\begin{align*}
|P_1|^2D_1(\gamma)=\beta_1(\gamma)+{\cal O}(\varepsilon^4),
\end{align*}
From the hypotheses of \Hy4 and \Hy2, one has around $\gamma_0$ the following Taylor expansions:
\begin{align*}
D_1(\gamma)=D_1(\gamma_0)+{\cal O}(|\gamma-\gamma_0|),\quad \beta_1(\gamma)=\beta_1'(\gamma_0)(\gamma-\gamma_0)+{\cal O}(\gamma-\gamma_0)^{2}.
\end{align*}
and
\begin{align}\label{eqP1}
|P_1|^2=\frac{\beta_1'(\gamma_0)}{D_1(\gamma_0)}(\gamma-\gamma_0)+{\cal O}(\gamma-\gamma_0)^2+{\cal O}(\varepsilon^4).
\end{align}
For $\varepsilon$ small enough, one has $|P_1|=\varepsilon$ from Th~\ref{Thbif} and since the numerator is not null from \Hy4, equation~\eqref{eqP1} shows that $(\gamma-\gamma_0)$ is proportional to $\varepsilon^2$. Moreover, the left hand side of~\eqref{eqP1} is a positive quantity, implying:
\begin{align}
\left\{\begin{array}{cc}
&\Re[\alpha](\gamma-\gamma_0)>0\\
&\Im[\alpha]=0
\end{array}\right..
\end{align}
Hence the sign of $\Re [\alpha]$ determines the direction of the bifurcation (direct or inverse).
\end{proof}

The last result of this section concerns the influence of $\gamma$ on the oscillation frequency, $\omega_1$, around the threshold. It requires two further hypotheses, one on the linear part of the system and one on the coefficients in front of the quadratic and cubic terms, contained in the function $D_1$. The proof is in fact an application of the implicit function theorem. For the next result, $\beta_1$, $D_1$ shall be written as functions of two variables: $\beta(\omega_1,\gamma)=\beta_1(\gamma)$, $D(\omega_1,\gamma)=D_1(\gamma)$.
\begin{Corollary}\label{corolomega}
Under the hypotheses of Th.~\eqref{ThP1}, suppose the function $\Im[D\beta]$ is twice continuously differentiable with respect to its variable $\omega_1$ and $\gamma$ in an open set around the point $(\omega_1,\gamma_0)$, and 
$$\left.\frac{\partial}{\partial \omega_1}\Im[D\beta_1]\right|_{(\omega_1,\gamma_0)}\neq0,\quad \left.\frac{\partial^2}{\partial \omega_1^2}\Im[D\beta_1]\right|_{(\omega_1,\gamma_0)}\neq0.
$$ 
Suppose further that $\beta$ is differentiable with respect to both variables $\omega_1$ and $\gamma$ at $(\omega_1,\gamma_0)$. Then the expression of the angular frequency $\omega_1$ for $\gamma$ in a neighbourhood of $\gamma_0$ is given by:
\begin{equation}\label{omegaT}
\omega_1(\gamma)=\omega_1(\gamma_0)+\omega_1'(\gamma_0)\cdot (\gamma-\gamma_0)+{\cal O}(\gamma-\gamma_0)^2,
\end{equation}
where $\omega_1(\gamma_0)$ is the solution of
$\Im [\beta(\omega_1(\gamma_0),\gamma_0)]=0$
and
$$\omega_1'(\gamma_0)=-\frac{\Im\left[D\frac{\partial \beta}{\partial \gamma} \right]{(\omega_1(\gamma_0),\gamma_0)}}{\Im\left[D\frac{\partial \beta}{\partial \omega} \right]{(\omega_1(\gamma_0),\gamma_0)}}.
$$
\end{Corollary}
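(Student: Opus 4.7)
The plan is to reduce the problem to a single real scalar equation in $\omega_1$ and $\gamma$ to which the implicit function theorem can be applied.

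Starting from the relation obtained in the proof of Theorem~\ref{ThP1},
$$
\beta(\omega_1,\gamma) - |P_1|^2\,D(\omega_1,\gamma) = \mathcal{O}(\varepsilon^4),
$$
in which $|P_1|^2$ is real, I would exploit this reality by forming an appropriate combination (multiplying by $D$ or its conjugate and taking the imaginary part) so as to eliminate $|P_1|^2$. Combined with the estimate $|P_1|^2 = \mathcal{O}(|\gamma-\gamma_0|)$ from Theorem~\ref{ThP1}, this yields a scalar equation of the form
$$
F(\omega_1,\gamma) := \Im[D(\omega_1,\gamma)\,\beta(\omega_1,\gamma)] = \mathcal{O}((\gamma-\gamma_0)^2),
$$
which, up to a higher-order remainder, characterises $\omega_1$ as an implicit function of $\gamma$.

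Next, I would apply the implicit function theorem to $F(\omega,\gamma)=0$ at the reference point $(\omega_1(\gamma_0),\gamma_0)$. Hypothesis \Hy2 provides $\beta(\omega_1(\gamma_0),\gamma_0)=0$, so both $F$ and $\Im[\beta(\cdot,\gamma_0)]$ vanish there; this is the source of the characterisation of $\omega_1(\gamma_0)$ announced in the corollary. The $C^2$ regularity of $F$ and the non-degeneracy condition $\partial_\omega F(\omega_1(\gamma_0),\gamma_0)\neq 0$, both assumed in the statement, are exactly the ingredients the theorem demands, and they yield a unique $C^2$ branch $\gamma\mapsto\omega_1(\gamma)$ solving $F=0$ near $\gamma_0$.

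Finally, the derivative at $\gamma_0$ follows from the standard formula
$$
\omega_1'(\gamma_0) = -\frac{\partial_\gamma F(\omega_1(\gamma_0),\gamma_0)}{\partial_\omega F(\omega_1(\gamma_0),\gamma_0)}.
$$
Since $\beta$ vanishes at $(\omega_1(\gamma_0),\gamma_0)$, the Leibniz rule collapses the numerator and denominator to $\Im[D\,\partial_\gamma\beta]$ and $\Im[D\,\partial_\omega\beta]$ respectively, giving the formula of the corollary; the remaining second-order term in~\eqref{omegaT} is then just the Taylor remainder afforded by the $C^2$ regularity. The main technical point to monitor is the bookkeeping of error terms: one must verify that the $\mathcal{O}(\varepsilon^4)$ residual inherited from Theorem~\ref{ThP1}, rewritten through $\varepsilon\sim\sqrt{|\gamma-\gamma_0|}$, genuinely remains of order $(\gamma-\gamma_0)^2$, so that it contributes only to the remainder in~\eqref{omegaT} and does not contaminate $\omega_1'(\gamma_0)$.
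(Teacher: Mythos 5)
Your proposal is correct and follows essentially the same route as the paper: both arguments apply the implicit function theorem to the scalar function $\Im[D\beta]$ around $(\omega_1(\gamma_0),\gamma_0)$, use $\beta(\omega_1(\gamma_0),\gamma_0)=0$ from \Hy2 to collapse the Leibniz rule, and extract $\omega_1'(\gamma_0)$ from the standard IFT derivative formula. The only real difference is where the key vanishing identity comes from. The paper reads it off directly from the already-proved facts that $\beta_1(\gamma_0)=0$ and that $\alpha=D_1(\gamma_0)^{-1}\beta_1'(\gamma_0)$ is real (Th.~\ref{ThP1} point i)), and then differentiates the composite $\beta_1(\gamma)=\beta(\omega_1(\gamma),\gamma)$ by the chain rule. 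You re-derive the same vanishing by returning to the bifurcation relation $\beta=|P_1|^2 D+\mathcal{O}(\varepsilon^4)$ and eliminating the real factor $|P_1|^2$ by conjugation; since the reality of $\alpha$ in the paper is itself a consequence of $|P_1|^2$ being real, the two derivations are equivalent. Your explicit attention to the error-term bookkeeping (that the $\mathcal{O}(\varepsilon^4)=\mathcal{O}((\gamma-\gamma_0)^2)$ residual ends up only in the remainder of~\eqref{omegaT}) is a welcome addition; the paper's proof passes over this silently. One small point: when you multiply by $\bar D$ and take imaginary parts you naturally land on $\Im[\bar D\beta]$, whereas the corollary (and the paper's proof) writes $\Im[D\beta]$; this is a conjugation convention the paper does not spell out, and your version is in fact the one that makes $|P_1|^2$ drop out cleanly, so you should simply state which convention you adopt and carry it through consistently.
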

\begin{proof}
The Taylor expansion in Eq.~\eqref{omegaT} is a consequence of the implicit function theorem applied to the twice differentiable function $\Im[D\beta]$ around $\gamma_0$.
The value of $\omega_1(\gamma_0)$ is given by Th.~\ref{Thbif}. The hypothesis \Hy2 $2)$ and the point $i)$ of
Th.~\ref{ThP1}, imply that
\begin{equation}\label{condI0}
\Im[\frac{dD_1\beta_1}{d\gamma}](\gamma_0)=\Im\left[D_1(\gamma_0)\frac{d \beta_1}{d \gamma}(\gamma_0)\right]=0.
\end{equation}
The implicit function theorem and the hypotheses of differentiability allow to write
\begin{align*}
& \Im\left[D \frac{d\beta_1}{d\gamma} \right](\gamma_0)=\frac{d\omega}{d\gamma}(\gamma_0)\cdot\Im\left[D\frac{\partial \beta}{\partial \omega} \right]{(\omega_1(\gamma_0),\gamma_0)}+\Im\left[D\frac{\partial \beta}{\partial \gamma} \right]{(\omega_1(\gamma_0),\gamma_0)}.
\end{align*}
Putting this latter expression in Eq.~\eqref{condI0} concludes the proof.
\end{proof}

\section{Example of the woodwind musical instruments}\label{woodwind}
A reed woodwind musical instrument, such as a clarinet or a saxophone, is an example of a system which can be described by an hyperbolic PDE and a localized polynomial nonlinearity. 
In acoustics, the major challenge is to understand the influence on the produced sound of physical parameters such as the pressure in the mouth of the musician, the stiffness of the reed, the shape of the mouthpiece and of the resonator. Assuming it can be done, a first order calculation by linearizing the system gives the bifurcation point and the frequency of oscillation at this point. But the investigation of these systems requires to go further and to obtain the shape of the Fourier harmonics with respect to the bifurcation parameter. That is why the frequency domain approach is perfectly appropriate here.
For small oscillations, one can obtain analytical formulae giving information on the solution of the system, but, as said in the introduction this requires until now a hypothesis on the Fourier series of the solution. Let $p_0\in L^2(T)$ be the acoustic pressure inside the mouthpiece of the wind instrument under study and let us suppose it is oscillating. The hypothesis made by acousticians, see e.g.~\cite{GGL}, \cite{KOG}, \cite{SKVG}, \cite{RGKSV}, is that the $q$-th harmonic of this pressure obeys:
\begin{align}\label{eqWorman}
|P_q|\le|P_1|^{|q|},\quad q\neq0.
\end{align}
This hypothesis is now going to be proved in the framework of the mathematical results obtained in the previous sections.

The equations describing this system are detailed in e.g.~\cite{FR}, \cite{KC}. Although the instrument possesses a three dimensional shape, the model is one dimensional and $X=[0,\ell]$ where $\ell$ is the length of the instrument. The dimensionless variables associated to the system are the acoustic pressure, $p$, and the volume flow, $u$, inside the instrument, both belonging to ${\cal X}$, and $h\in L^2(T)$ the reed tip opening localized $x=0$. The pressure inside the mouth of the instrumentalist $\gamma$ is the bifurcation parameter. Let us call $p_0$ and $u_0$ the pressure and volume flow at $x=0$. The acoustic pressure depends on the volume flow entering the mouthpiece through the relation:
\begin{align}
Ap=\delta(x)Bu,
\end{align}
where $B=\partial/\partial t$ and $A$ is the differential operator associated to the wave equation inside the instrument, with losses at the boundaries and along the resonator (visco-thermal losses). By using the Green functions formalism, it can be written as
\begin{align}
p(t,x)=\int_{T\times X}g(t-\tau,x-y)Bu(\tau,y)\delta(y)dyd\tau=\int_{T}g(t-\tau,x)Bu(\tau,0)d\tau,
\end{align}
where $g$ is the Green function solution of $Ap=\delta(x)\delta(t)$. An example of this Green function for a model of clarinet is given in~\cite{KC}, Chapter II.5, section 5.2: the model considered is the wave equation with visco-thermal losses on $X$, Neumann boundary condition at one end and Dirichlet boundary condition at the other end. This gives for the time Fourier transform $\widehat{g}$ of $g$:
\begin{align}
\widehat{g}(x,\omega;x_s)=c\sum_{n>0}\frac{\cos(k_nx)\cos(k_nx_s)}{\omega_n^2+i\omega\omega_nq_n-\omega^2},
\end{align}
where $c$ is a constant, $k_n=(2n-1)\pi/(2\ell)$ and $\omega_n$ is proportional to $k_n$. The term $q_n$ is proportional to $\sqrt{k_n}$ in order to model visco-thermal losses. This choice of $q_n$ is motivated by physical arguments but leads to a fractional derivative in the expression of $A$. This is partly why the present study start with Eq.~\eqref{eqreel}, where $Z$ is much more convenient to define than $A$.
Let $U={\cal F}u_0$, the above equation expressed in the Fourier space, and at $x=0$, yields:
\begin{align}
P=\widehat{Z}U,
\end{align}
where $\widehat{Z}$ is called a transfer function.
This is the equation~\eqref{eqreel} in the case of wind instruments, in the frequency domain. Indeed, $U$ is expressed in terms of $P$ through a nonlinear relation in the following. Notice that the quantity $\widehat{Z}$ is called by acousticians the input impedance of the bore and can be measured thanks to physical experiments. It is bounded, provided losses are taken into account in the model. See~\cite{KC}, chapter II.7, for examples of impedances of various wind instruments. The Fourier basis is the eigenbasis of this operator, thus in the Fourier space, $\widehat{Z}$ is a multiplication operator. For all $q\in\Z$,
\begin{align}\label{PZU2}
P_q=\widehat{Z}(\omega_q)U_q,
\end{align}
where the complex-valued function $\widehat{Z}(\cdot)$ is never null except maybe at $\omega_0=0$.

The nonlinear relationship is localized at the tip of the mouthpiece and relates $u_0$, $p_0$ and $h$. Under some physical hypotheses, the Bernoulli law at the entry of the instrument gives the nonlinear relationship at time $t\in\R$:
\begin{align}\label{eqnlt2}
u_0^2(t)=\zeta^2\left(1-\gamma+h(t)\right)^2\left(\gamma-p_0(t)\right),
\end{align}
where $\zeta$ is a positive constant, $0<\gamma<1$. At rest, when there is no action from the instrumentalist, $\gamma=0$, and the equilibrium position of the instrument yields $u_0=0$, $p_0=0$ and $h=0$. It is supposed that for all $t$, $p_0(t)$ and $h(t)$ are small enough such that $\gamma-p_0(t)\ge 0$ and $1-\gamma+h(t)\ge 0$ (small oscillations). 

Let $H$ denote the Fourier transform of $h$. The reed opening is related to $p_0$ via a linear relation expressed in the frequency domain:
\begin{align}\label{EDP}
H_q&=\widehat{D}(q)P_q,\qquad\text{for all}\ q\in\Z.
\end{align}
The complex-valued function $\widehat{D}$ is bounded. Let us introduce $Q_0$ the orthogonal projection associated to the constant part of the Fourier decomposition. From~\eqref{PZU2}, one has
\begin{align}\label{PZU}
U_q&=\widehat{Y}(q)(1-Q_0)P_q+U_0Q_0,\quad\text{for all } q\in\Z.
\end{align}
In the physical system under study, the function $\widehat{Y}(1-Q_0):\Z\to\C$ is bounded. Nevertheless, the value $\widehat{Y}(0)$ is not defined as it is assumed that $P_0=0$ and $U_0>0$. It gives a good opportunity to show how to cope with this problem and satisfy the hypothesis of boundedness of the operators despite it: in this case the system will be written as a set of two equations, with a specific one for $U_0$, see below how to obtain~\eqref{eqNLP2} and~\eqref{eqNLP20}.
Behind the function $\widehat{Y}$ is hidden the hyperbolicity given by the wave equation inside the resonator and the contribution of $B$. It reveals itself when looking at the limit of $|\widehat{Y}(q)|$ when $q$ tends to infinity: it tends to a strictly positive constant.

Let us assume the pressure $p_0$ satisfies hypothesis \Hy1.
Let us recall that since $\widehat{D}$ is bounded, one has for example, for all $q\in\Z$,
\begin{align*}
&\widehat{hp}(q)=(H*P)(q)=\left((\widehat{D}P)*P\right)(q)=\sum_{n\in\Z}\widehat{D}_{q-n}P_{q-n}P_n,
\end{align*}
as shown by lemma~\ref{lemmetf}.
If, like in~\cite{RGKSV}, the following notations are introduced:
\begin{align}
&u_{00}=\zeta^2\gamma(1-\gamma)^2,\quad A_q=2\zeta^2\gamma(1-\gamma)\widehat{D}(q)-\zeta^2(1-\gamma)^2,\nonumber\\
&\textbf{B}_{q,n}=\zeta^2\gamma \widehat{D}(q-n)\widehat{D}(n)-2\zeta^2(1-\gamma)\widehat{D}(q-n),\nonumber\\
&C_{q,n,m}=-\zeta^2 \widehat{D}(q-n)\widehat{D}(n-m),
\end{align}
and for $n\neq0$, $n\neq q$,
\begin{align}
\H_{q,n}=\textbf{B}_{q,n}-\widehat{Y}(q-n)\widehat{Y}(n),
\end{align}
then the relation~\eqref{eqnlt2}, together with~\eqref{EDP},~\eqref{PZU} gives (see~\cite{RGKSV} for more details):
\begin{subequations}\label{eqNLP2T}
\begin{align}
(2U_0\widehat{Y}(q)-A_q)P_q=\sum_{n\in\Z}\H_{q,n} P_{q-n}P_n+\sum_{n\in\Z}\sum_{m\in\Z}C_{q,n,m}P_{q-n}P_{n-m}P_m,&\qquad\text{for } q\neq0\label{eqNLP2}\\
U_0^2=u_{00}+\sum_{n\in\Z}\H_{0,n} |P_n|^2+\sum_{n\in\Z}\sum_{m\in\Z}C_{0,n,m}P_{-n}P_{n-m}P_m,&\qquad\text{for } q=0.\label{eqNLP20}
\end{align}
\end{subequations}
Lemma~\ref{lemmetf} justifies this expression as $\widehat{Y}(1-Q_0)$ and $\widehat{D}$ are bounded and satisfy \Hy3.
As said previously, Eq.\eqref{eqNLP20} appears because $\widehat{Y}$ is not defined at 0. In order to obtain an expression similar to~\eqref{eqfreq}, one has to replace $U_0$ in~\eqref{eqNLP2} by its value given in~\eqref{eqNLP20}. For this purpose it is assumed that $U_0$ has a positive value, allowing to express it as a square root. It is assumed that the system encounters a bifurcation for $\gamma_0>0$ which implies $u_{00}>0$. Thus, for a small enough acoustic pressure, this square root can be expanded as a Taylor series. Replacing $U_0$ by this expression in~\eqref{eqNLP2} leads to a nonlinearity of the form presented in lemma~\ref{lemmetf}. As a consequence, theorems~\ref{Thbif} and~\ref{Th1} can be used.
The above relations are complex to write in the canonical form given in~\eqref{eqfreq}, however, for instance one has for the linear part:
\begin{equation}\label{eqJcar}
\widehat{J_{\gamma}}(q)=(2\sqrt{u_{00}}\widehat{Y}(q)-A_q).
\end{equation}
This linear part satisfies \Hy2, provided $(2\sqrt{u_{00}}\widehat{Y}(q)-A_q)=0$ for only two values $q=\pm s$.
Notice that the stationary solutions $U_0=\pm \sqrt{u_{00}}$, where for all $n\in\Z$, $P_n=0$, exist for all $\gamma\in(0,1)$. Consequently, from~\eqref{eqJcar}, the bifurcation parameter and the frequency of oscillation at the bifurcation will be given, according to Th.~\ref{Thbif}, by the characteristic equation:
$$2\sqrt{u_{00}}\widehat{Y}(s)-A_s=0.
$$
The main objective is now attained: hypotheses \Hy2 and \Hy3 hold and if one supposes a solution of the type described in \Hy1, the hypothesis~\eqref{eqWorman} can be justified in the framework of the present study by applying Th.~\ref{Th1}. The results presented in the publications cited in this section, which required at first the assumption~\eqref{eqWorman}, can be obtained by assuming the weaker assumption of twice continuous differentiability of the solution. Let us notice a slight difference between the result of theorem~\ref{Th1} and the relation~\eqref{eqWorman}. The fact that there exists a value $k$ in Th.~\ref{Th1} where~\eqref{eqWorman} stops to be valid does not change the results of the previously cited publications. Indeed, these results can be retrieved by knowing estimates on the first harmonics and involve only a limited number of harmonics in calculus. Moreover, $k$ can be taken as large as needed by limiting the range of amplitude for which the results are valid.
Eventually, the hypotheses required in section~\ref{stable} are satisfied, see~\cite{RGKSV}, and the quantity $D_1(\gamma_0)$ defined in this latter section can be deduced from equations~\eqref{eqNLP2T}. This leads to
$$|P_1|=\sqrt{|\alpha|}\sqrt{\gamma-\gamma_0}+{\cal O}(|\gamma-\gamma_0|^{3/2}),
$$
where 
$$\alpha=\frac{\beta'(\gamma_0)}{D_1(\gamma_0)},
$$
confirming the results of~\cite{RGKSV}.
\section*{Acknowledgements}
This work was supported by the french ANR project CONSONNES. The author wishes to thank J. Kergomard for his great help and advice, B. Lombard for its numerous remarks and suggestions on the manuscript, as well as C. Vergez, P. Guillemain, F. Silva and A. Leger for the discussions and advices about this work.



\end{document}